\numberwithin{equation}{section}
\pgfplotsset{compat = newest}
\pgfplotsset{every axis/.append style={
    font=\large,
    line width=1pt,
    tick style={line width=0.8pt}}}
\def\ch{{\mathcal H}}
\def\Om{\Omega}
\def\S{\Sigma}
\def\a{\alpha}
\def\b{\beta}
\def\veps{\varepsilon}
\def\gf{{\mathfrak F}}
\newtheorem{thm}{Theorem}[section]
\newtheorem{lem}[thm]{Lemma}
\newtheorem{prop}[thm]{Proposition}
\theoremstyle{definition}
\newtheorem{rem}[thm]{Remark}
\newtheorem{defin}[thm]{Definition}
\def\dim{\mathop{\rm dim}}
\def\spn{\mathop{\rm span}}
\def\aut{\mathop{\rm Aut}}
\def\id{\mathop{\mathds{1}}}
\newcommand{\D}{\mathfrak{D}}
\newcommand{\DN}{\Delta(n)}
\newcommand{\DD}{\Delta}
\newcommand{\M}{\mathcal{M}}
\newcommand{\N}{\mathbb{N}}
\newcommand{\vp}{\varphi}
\newcommand{\ve}{\varepsilon}
\newcommand{\R}{\mathbb{R}}
\begin{document}

\title{Finitely Generated Weakly Monotone $C^*$-algebra}
\author{Maria Elena Griseta}
\address{Maria Elena Griseta\\
Dipartimento di Matematica\\
Universit\`{a} degli studi di Bari\\
Via E. Orabona, 4, 70125 Bari, Italy}
\email{\texttt{mariaelena.griseta@uniba.it}}
\author{Janusz Wysocza\'nski}
\address{Janusz Wysocza\'nski\\
 Mathematical Institute, Wroclaw University, pl.  Grunwaldzki 2, 50–384 Wroclaw, Poland}
\email{\texttt{janusz.wysoczanski@math.uni.wroc.pl}}

\date{\today}

\maketitle

\section*{abstract}
\label{abstract}
We consider the $C^*$-algebra generated by finitely many annihilation operators acting on the weakly monotone Fock space, and we call it weakly monotone $C^*$-algebra. We give an abstract presentation for this algebra, showing that it is isomorphic to a suitable quotient of a Cuntz-Krieger $C^*$-algebra $\mathcal{O}_A$ corresponding to a suitable matrix $A$.
Furthermore, we show that the diagonal subalgebra of the weakly monotone $C^*$-algebra is a MASA and we give the detailed description of its Gelfand spectrum.

\bigskip

\vskip0.1cm\noindent \\
{\bf Mathematics Subject Classification}: 46L05, 46K10.  \\
{\bf Key words}: weakly monotone $C^*$-algebras, Cuntz-Krieger $C^*$-algebras, maximal abelian subalgebra.\\

\section{introduction}
\label{intro}
The celebrated papers by Cuntz and Krieger \cite{cuntzKri80} and Cuntz \cite{Cuntz81} studied universal $C^*$-algebras generated by nonzero partial isometries $S_i$, indexed by a \textit{finite} set $\Sigma$, with relations, known as  $(CK1)$ and $(CK2)$,  for their support projections $Q_i=S_i^*S_i$ and range projections $P_i=S_iS_i^*$, later called the Cuntz-Krieger relations, given by a finite matrix $\displaystyle A=(a_{ij})_{i,j\in \Sigma}$, with entries $a_{ij}\in \{0, 1\}$. 
\begin{eqnarray*}
(CK1)&& P_iP_j = 0\quad \text{if} \quad i\neq j \\
(CK2)&& Q_i=\sum_{j\in\Sigma} a_{ij}P_j
\end{eqnarray*} 
In particular, the range projections were assumed $(CK1)$ to be mutually orthogonal and the support projections were finite sums of some of the range projections, chosen by the given matrix $A$. It was observed in \cite{KPRR1997}, and then developed in \cite{KPR1998}, that the matrix $A$ can be thought of as an incidence matrix of a directed graph, in which the range projections are associated with vertices, and the partial isometries are associated with directed edges. This approach has led to the development of the study of \textit{graph} $C^*$\textit{-algebras}. 
In this context Hong and Szymański studied in \cite{HSz2002} $C^*$-algebras related to odd-dimensional quantum spheres $C^*(S_q^{2n-1})$, which are finitely generated by elements which satisfy Twisted Canonical Commutation Relations (TCCR) with the parameter $q\in [0,1)$, defined by Pusz and Woronowicz in \cite{PW1989}. The construction \cite{PW1989} is based on q-deformed twist operator $T_q$, defined on the orthonormal basis $\{e_j:1\leq j \leq n\}$ of the tensor product $H\otimes H$ of a Hilbert space  $H$ by the formula 
\begin{eqnarray*}
T_{q}(e_j\otimes e_k) = 
\begin{cases}
q (e_k\otimes e_j) & \text{if} \quad j<k \\ 
q^2 (e_j\otimes e_j) & \text{if} \quad j=k \\ 
(q^2-1) (e_j\otimes e_k)+q (e_k\otimes e_j) & \text{if} \quad j>k 
\end{cases}   
\end{eqnarray*}
The commutation relations TCCR, described in \cite{PW1989}, which follow, are of the form (\cite{HSz2002}, Section 4)
\begin{eqnarray*}
1&=& z_1z_1^*+\cdots + z_nz_n^* \\
z_iz_j&=& qz_jz_i \quad \text{if} \quad i<j \\ 
z_j^*z_i&=&  qz_iz_j^* \quad \text{if} \quad i\neq j \\ 
z_i^*z_i&=& z_iz_i^* + (1-q^2) \sum_{j<i} z_jz_j^* \quad \text{for} \quad 1\leq i \leq n  
\end{eqnarray*}
and are studied there as generating universal $C^*$-algebra of the odd-dimensional quantum sphere $S_q^{2n-1}$. In particular, an abelian subalgebra, generated by $z_jz_j^*$ is considered in \cite{HSz2002} and its spectrum is described. As shown by Bożejko \cite{B2012}, the deformed twist operator $T_q$ defines deformation of the full Fock space, on which creation and annihilation operators are defined. In the particular case $q=0$ the $T_0$ twist operator leads to the  \textit{weakly monotone Fock space}, originally defined by the second-named author in \cite{JWys2005}, and subsequently studied in \cite{CGW20} and \cite{CGW21}. The weakly monotone Fock space served as a model for monotone independence of Muraki \cite{Mur2001}, and was generalized in \cite{JWys07, JWys10} to algebras indexed by arbitrary partially ordered set, instead of the totally ordered set of positive integers, still the creation and annihilation operators there are partial isometries.
 
In the present paper we study the $C^*$-algebra $C^*(A_1, \ldots , A_n)$  generated by the annihilation operators $A_i$, (as well as the creation operators $A^{\dag}_i$), for $1\leq i \leq n$, which act on the weakly monotone Fock space. They correspond to the notation of \cite{HSz2002} as 
\[
z_i^*:=A_i, \quad z_i:=A^{\dag}_i \quad \text{for} \quad q=0. 
\]
This algebra contains the vacuum projection $P_{\Omega}=A_0:=A_1A^{\dag}_1-A^{\dag}_1A_1$, whereas the corresponding relation in \cite{HSz2002} is $z_1^*z_1-z_1z_1^*=0$, so that $z_1$ is normal. Moreover, it is unital as the (finite) sum of the projections $A_0+A^{\dag}_1A_1+\cdots + A^{\dag}_nA_n$ is its unit.  In addition, from the point of view of (CK1)-(CK2) relations and graph algebras, the defining matrix $A=(a_{ij})_{i,j=0}^{n}$ is lower-triangular with entries $1$ on main diagonal and below, and $0$ above it.   In this framework, we show that the weakly monotone $C^*$-algebra is a quotient of the Cuntz-Krieger algebra $\mathcal{O}_A$ associated with this matrix. The case concerning the $C^*$-algebras generated by an infinite family of creation and annihilation operators $A_i$, $i\in \mathbb{N}$ and $i\in \mathbb{Z}$, respectively, has been studied in \cite{CDRW}, where the relations with the Exel-Laca algebras \cite{exelLaca99} have been investigated.  

One of our main results is the description of the \textit{maximal abelian subalgebra} $\D_n\subset C^*(A_1, \ldots , A_n)$ and its spectrum, i.e. the  Gelfand space. In particular, we show that the spectrum can be identified as a compact subset of the $n$-dimensional cube $[0,\ 1]^n$, where the accumulation points lie on the edges. Pictures of the cases $n=2$ and $n=3$ are provided for the reader's convenience, to demonstrate the crucial properties of the spectrum. 

The paper is organized as follows. In Section \ref{sec:prel}, we recall some definitions and notions concerning the weakly monotone Fock space, the Cuntz-Krieger algebras and the $C^*$-algebras generated by isometries. In Section \ref{sec:WMalgebra}, we study the $C^*$-algebra generated by $n\geq2$ annihilation operators acting on this space, showing that it is a faithful representation for a suitable quotient of $\mathcal{O}_A$. Section \ref{sec:MASA} is devoted to characterizing a MASA for the weakly monotone $C^*$-algebra. More in detail, Section \ref{subsec:MASA2} deals with the case of $n=2$ generators and in Section \ref{subsec:MASAn} we give analogous considerations to determine a MASA for the $C^*$-algebra generated by $n$ annihilation operators. Finally, in Section \ref{subsec:spectrum} we study the spectrum of the maximal abelian subalgebra.

\section{preliminaries}
\label{sec:prel}
This section gives a miscellany of definitions, notations, and some known results often recalled in the sequel. \\
We start by recalling the definition of weakly monotone Fock space and creation and annihilation operators acting on it. For a fuller treatment, we refer the reader to \cite{JWys2005}.

Let $\ch$ be a separable Hilbert space with a fixed orthonormal basis $\{e_i:i\geq1\}$. By $\gf(\ch)$ we denote the full Fock space on $\ch$, whose vacuum vector is $\Om=1\oplus0\oplus\cdots$. The \emph{weakly monotone Fock space}, in the sequel denoted by $\gf_{WM}(\ch)$, is the closed subspace of $\gf(\ch)$ spanned by $\Om$, $\ch$ and all the simple tensors of the form $e_{i_k}\otimes e_{i_{k-1}}\otimes \cdots\otimes e_{i_1}$, where $i_k\geq i_{k-1}\geq\cdots\geq i_1$ and $k\geq2$.

If the Hilbert space $\ch$ is finite dimensional with $n =\dim(\ch)\geq 2$, then the basis $\mathbf{B}$ for $\gf_{WM}(\ch)$ consists of the vacuum and all the simple tensors
\begin{equation}
\label{basis}
e_n^{k_n}\otimes e_{n-1}^{k_{n-1}}\otimes\cdots\otimes e_1^{k_1}\,,
\end{equation}
where $k_n, k_{n-1},\ldots,k_1\geq 0$, $e_m^{k}:=\underbrace{e_m\otimes\cdots\otimes e_m}_k$ if $k\geq1$, and the convention that $e^{k_i}_{i}$ does not appear in \eqref{basis} if $k_i=0$.

The weakly monotone creation and annihilation operators with "test function" $e_i$, denoted by $A^\dag_i$ and $A_i$ respectively, are defined on the linear generators as follows. For any $i_k\geq i_{k-1}\geq \cdots \geq i_1$, $k\geq 2$, and $j\geq 1$
\begin{align*}
&A_i\Om=0\,, \quad A_i(e_j)=\delta_{ij}\Om\,, \\
& A_i(e_{i_k}\otimes e_{i_{k-1}}\otimes \cdots\otimes e_{i_1})=\delta_{ii_k}e_{i_{k-1}}\otimes \cdots\otimes e_{i_1}\,,
\end{align*}
where $\delta_{ij}$ is the Kronecker symbol, and
\begin{align*}
&A^{\dag}_i(\Om)=e_i\,, \quad A^{\dag}_ie_j=\a_{ij}e_i\otimes e_j\,, \\
&A^{\dag}_i(e_{i_k}\otimes e_{i_{k-1}}\otimes \cdots\otimes e_{i_1})=\a_{ii_k} e_i\otimes e_{i_k}\otimes e_{i_{k-1}}\otimes \cdots\otimes e_{i_1}\,,
\end{align*}
where
\begin{equation*}
\a_{j,k}=\begin{cases}
1 & \text{if $j\geq k$,}\\
0 & \text{otherwise.}
\end{cases}
\end{equation*}
They can be extended by linearity and continuity to the whole $\gf_{WM}(\ch)$, where they are adjoint to each other, and with unit norm. Furthermore, they satisfy the following relations
\begin{equation}
\begin{split}
\label{cr}
A^{\dag}_iA^{\dag}_j&=A_jA_i =0 \quad\text{if $i<j$,}\\
A_iA^{\dag}_j& =0 \,\,\,\,\,\,\,\,\,\,\,\,\,\,\,\,\,\,\,\,\,\,\,\,\,\,\, \text{if $i\neq j$.} \\
\end{split}
\end{equation}

In what follows we recall the notion of Cuntz-Krieger algebras, and we refer the reader to \cite{cuntzKri80} for a deeper discussion.
Let $\Sigma$ a finite set, and $A=(a_{ij})_{i,j\in\Sigma}$ a complex matrix, with $a_{ij}\in\{0,1\}$ for each $i,j\in\Sigma$. In addition, suppose that in each row or column of $A$ there exists at least one element different from zero. Let $S_i$, $i\in\Sigma$ the non-zero partial isometries acting on the Hilbert space such that, for each $i\in\Sigma$, their support projections $Q_i:=S^{\ast}_iS_i$ and their range projections $P_i:=S_iS^{\ast}_i$ satisfy the following relations:
\begin{description}
  \item[CK1)] $P_iP_j=0$ for each $i\neq j$;
  \item[CK2)] $\displaystyle Q_i=\sum_{j\in\Sigma} a_{ij}P_j$, $i,j\in\Sigma$.
\end{description}
\begin{rem}
\label{rem:CK1}
We notice that the first condition $CK1)$ is equivalent to the following condition given in \cite{exelLaca99}, which we again denote by $CK1)$.
\begin{description}
  \item[CK1)] $\displaystyle I=\sum_{j\in\Sigma}S_jS^{\ast}_j$,
\end{description}
where $I$ denotes the identity operator on the Hilbert space $\ch$.
\end{rem}
Denote by $\mathcal{O}_A$ the $C^*$-algebra generated by the non-zero partial isometries $S_i$, $i\in\Sigma$, that satisfy the Cuntz-Krieger relations $CK1)$ and $CK2)$. Let $\mu=(i_1,\ldots,i_k)$, $i_j\in\Sigma$, a multiindex and denote by $S_{\emptyset}=I$, $S_{\mu}=S_{i_1}S_{i_2}\cdots S_{i_k}$. For each $\mu$, $S_{\mu}$ is a partial isometry and $S_{\mu}\neq 0$ if and only if $A(i_j,i_{j+1})=1$, ($j=1,\ldots, k-1$). Denote by $\mathcal{M}_A$ the set of all multi-indices $\mu$ such that $S_{\mu}\neq0$ and by $\Sigma_0$ the set of all $i\in\Sigma$ for which there are at least two different multi-indices $\mu=(i_1,\ldots,i_r)$ and $\nu=(j_1,\ldots,j_s)$ belonging to $\mathcal{M}_A$, such that $i_1=i_r=j_1=j_s=i$ ($r,s\geq2$),  while $i_k$, $j_l\neq i$ for each $1 <k<r$, $1 <l<s$.  Suppose that $A$ satisfies the following condition
\begin{description}
  \item[CK3)] For each $i\in\Sigma$ there exists $\mu=(i_1,\ldots,i_r)$, ($r\geq1$) belonging to $\mathcal{M}_A$ such that $i_1=i$ and $i_r\in\Sigma_0$.
\end{description}
If the matrix $A$ satisfy the condition $CK3)$, one says that the Cuntz-Krieger $C^*$-algebra $\mathcal{O}_A$ has the universal property (see \cite[Theorem 2.13]{cuntzKri80}). \\
The Cuntz-Krieger algebras $\mathcal{O}_A$ extend the Cuntz algebras $\mathcal{O}_n$, in which the $n\times n$ matrices $A$ are given by $a_{ij}=1$  for each $i,j\in\{1,\ldots,n\}$ (see \cite{cuntz77}).\\
Concerning the case in which the matrix $A$ does not satisfy the condition $CK3)$, in \cite{huef} the authors introduce a universal Cuntz-Krieger algebra $\mathcal{A}\mathcal{O}_A$ that coincides with $\mathcal{O}_A$ if condition $CK3)$ is satisfied. 

We next recall some important facts concerning the $C^*$-algebra generated by an isometry and we refer the reader to \cite{coburn67,davidson} for more details. We start by introducing several notations. Let us consider $\ch:=\ell_2(\mathbb{N})$ the Hilbert space with orthonormal basis $\{e_1,\ldots,e_n:n\geq1\}$ and denote by $\mathcal{B(\ch)}$ and $\mathcal{K}(\ch)$ the algebra of all bounded operators and linear compact operators on $\ch$, respectively. The unilateral shift $S$ is defined as $Se_n:=e_{n+1}$.\\

Here we take into account the general case in which $A\in \mathcal{B}(\ch)$ is an arbitrary isometry, i.e. $A^{\ast}A=I$. Exploiting the Wold's decomposition, we can write $A=W\oplus S_{\gamma}$, where $W$ is unitary and $S_{\gamma}$ denote the shift of multiplicity $\gamma$, i.e. the $\gamma$-fold direct sum $S\oplus S\oplus\cdots\oplus S$ acting on $\ch\oplus\ch\oplus\cdots\oplus\ch$. Then there is an isometric $*$-isomorphism between the $C^*$-algebra generated by the isometry $A$, $C^*(A)$, and the $C^*$-algebra $C^*(W\oplus S)$. Therefore $C^*(A)\cong C^*(S)$ and there is a unique minimal ideal $\mathcal{I}(A)\neq0$ such that $C^*(A)/\mathcal{I}\cong \mathcal{C}(\mathbb{T})$, the algebra of all continuous functions on the perimeter of the unit circle $\mathbb{T}:=\{z\in\mathbb{C}:|z|=1\}$.
If the isometry $A$ is unitary, then $C^*(A)$ is isometrically $*$-isomorphic to $\mathcal{C}(\sigma(A))$, the algebra of all continuous functions defined onto the spectrum of $A$. \\
In particular, by choosing $\ch=\mathbb{C}$ and the isometry $A:=\lambda \id$, $\lambda\in\mathbb{C}$, we obtain the one dimensional irreducible representations for the $C^*$-algebra generated by the isometry $A$, $\pi_{\lambda}(A):=\lambda I$, for each $\lambda\in\mathbb{C}$.

\section{The weakly monotone $C^*$-algebra}
\label{sec:WMalgebra}

In this section, we define the weakly monotone $C^*$-algebra generated by $n\geq 2$ annihilation operators acting on the weakly monotone Fock space, and we prove that it is isomorphic to a suitable quotient of the Cuntz-Krieger algebra $\mathcal{O}_A$, for a given matrix $A$.  \\

\begin{rem}
\label{rem:isometry}
We notice that for each $i=1,2,\ldots,n$, the weakly monotone creation operators $A^{\dag}_i$ are partial isometries, indeed $A_iA^{\dag}_i$ are projections onto the initial spaces $\gf_{WM}^{\leq i}$ for the operators $A^{\dag}_i$, given by
\begin{equation*}
    \gf_{WM}^{\leq i}(\ch):=\overline{\spn\{\Om,e_{k_m}\otimes \cdots \otimes e_{k_1}: i\geq k_m\geq \ldots \geq k_1, \ m\geq 1\}}\,.
\end{equation*}
In addition, the creator $A^{\dag}_n$ is an isometry, indeed $(A^{\dag}_n)^{\ast}A^{\dag}_n=A_nA^{\dag}_n=I$, where $I$ denotes the identity operator on $\gf_{WM}(\ch)$.
\end{rem}
\begin{rem}
\label{rem:M1M2}
Let $n=\dim(\ch)$. By \eqref{cr}, it follows that:
\begin{description}
  \item[M1)] $A^{\dag}_{n-1}A^{\dag}_n=0$ if and only if $A_nA_{n-1}=0$;
  \item[M2)] $A_{n-1}A^{\dag}_n=0$.
\end{description}
\end{rem}

Denote by $C^*(A_1,\ldots, A_n)$ the concrete $C^*$-algebra generated by the annihilation operators $A_1,\ldots, A_n$ acting on $\gf_{WM}(\ch)$. The goal of this section is to give an abstract representation for this weakly monotone $C^*$-algebra. To this end, we will show that it is isomorphic to a suitable quotient of the Cuntz-Krieger algebras described in Section \ref{sec:prel}. \\
We start by proving the following Lemma, which provides several relations between the weakly monotone creation and annihilation operators and $A_0:=P_{\Om}=A_1A^{\dag}_1-A^{\dag}_1A_1$, the orthogonal projection of the weakly monotone Fock space $\gf_{WM}(\ch)$ onto the subspace $\mathbb{C}\Om$. We recall that $A^{\ast}_i=A^{\dag}_i$ for each $i=1,\ldots,n$. \\
\begin{lem}
\label{lem:123}
The following relations are satisfied:
\begin{description}
  \item[1)] $A_0$ is normal, i.e. $A_0A^{\ast}_0=A^{\ast}_0A_0$;
  \item[2)] $\displaystyle A_iA^{\ast}_i=\sum_{j=0}^iA^{\ast}_jA_j$, for each $i=1,\ldots,n$;
  \item[3)] $A_nA^{\ast}_n=\displaystyle\sum_{j=0}^nA^{\ast}_jA_j=I$.
\end{description}
\end{lem}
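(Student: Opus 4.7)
\bigskip

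\noindent\textbf{Proof plan.}

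The plan is to identify each of the operators appearing in the statement with an explicit orthogonal projection onto a subspace of $\gf_{WM}(\ch)$ spanned by a piece of the basis $\mathbf{B}$ described in \eqref{basis}, and then simply compare the subspaces.

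First, for assertion 1), I would verify that $A_0 = A_1A^{\dag}_1 - A^{\dag}_1 A_1$ is in fact equal to the vacuum projection $P_{\Om}$; then it is self-adjoint, $A_0 = A_0^*$, hence trivially normal. A short direct computation on the basis suffices: on $\Om$, $A^{\dag}_1 A_1 \Om = 0$ while $A_1 A^{\dag}_1 \Om = A_1 e_1 = \Om$; on a nontrivial simple tensor $e_{i_k}\otimes\cdots\otimes e_{i_1}$, the operator $A_1 A^{\dag}_1$ vanishes unless $i_k = 1$ (in which case all $i_j = 1$ and both summands return the same vector), and the two pieces cancel, leaving $0$. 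Hence $A_0$ coincides with the rank-one projection $P_{\Om}$.

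For assertion 2), the key is the decomposition
\begin{equation*}
\gf_{WM}(\ch) = \mathbb{C}\Om \;\oplus\; \bigoplus_{j=1}^{n} \ch_j,
\end{equation*}
where $\ch_j$ is the closed linear span of those basis vectors \eqref{basis} whose \emph{leftmost} factor is $e_j$. I would first show that for $j \geq 1$, the support projection $A_j^* A_j$ equals the projection onto $\ch_j$: indeed the defining formulas give $A_j(e_{i_k}\otimes\cdots\otimes e_{i_1}) = \delta_{j,i_k}\, e_{i_{k-1}}\otimes\cdots\otimes e_{i_1}$ and $A_j e_j = \Om$, so $A_j$ vanishes precisely off $\ch_j$ and is isometric on $\ch_j$. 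Next, the range projection $A_i A_i^*$ is the projection onto the image of $A_i$, which, reading the same formula in reverse, is $\mathbb{C}\Om \oplus \ch_1 \oplus \cdots \oplus \ch_i$ (every basis vector with leftmost index $\leq i$ arises as $A_i$ applied to the corresponding vector prepended by $e_i$, and $\Om$ arises from $A_i e_i$). Combined with $A_0^* A_0 = A_0^2 = P_{\Om}$ from part 1), this yields
\begin{equation*}
A_i A_i^* \;=\; P_{\Om} + \sum_{j=1}^{i} P_{\ch_j} \;=\; \sum_{j=0}^{i} A_j^* A_j.
\end{equation*}

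Finally, assertion 3) is the special case $i=n$ of assertion 2) combined with the fact, already recorded in Remark \ref{rem:isometry}, that $A^{\dag}_n$ is an isometry, i.e.\ $A_n A_n^* = A_n A^{\dag}_n = I$; equivalently, every basis vector of $\gf_{WM}(\ch)$ has its leftmost factor indexed in $\{1,\ldots,n\}$, so $\mathbb{C}\Om \oplus \ch_1 \oplus \cdots \oplus \ch_n = \gf_{WM}(\ch)$.

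The only genuinely non-mechanical point is the bookkeeping in part 2): one has to be careful about whether $\Om$ lies in the range of $A_i$ (it does, via $A_i e_i = \Om$), and to match the support subspaces $\ch_j$ correctly; once the basis decomposition is in place, all three assertions follow by identifying projections onto the same subspace.
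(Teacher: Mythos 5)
Your proposal is correct and follows essentially the same route as the paper: identifying $A_j^*A_j$ and $A_iA_i^*$ with the projections onto the spans of basis vectors whose leftmost factor is exactly $e_j$, respectively at most $e_i$, is just a structural repackaging of the paper's verification of both sides of 2) on each basis vector, and 3) likewise rests on Remark \ref{rem:isometry}. Your explicit check that $A_0=P_{\Om}$ in part 1) is a harmless addition (the paper takes this identification as already established before the lemma and only invokes self-adjointness of the projection).
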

\begin{proof}
The first sentence follows immediately after observing that $A_0$ is self-adjoint, since it is an orthogonal projection. Concerning the second relation, for each $i=1,\ldots,n$, one has
\begin{equation*}
    A_iA^{\ast}_i(\Om)=A_i(e_i)=\Om \quad\text{and}\quad \sum_{j=0}^iA^{\ast}_j A_j(\Om) = A^{\ast}_0A_0(\Om)=\Om\,.
\end{equation*}
In addition, for each $k_l, k_{l-1},\ldots,k_1\geq 0$, with $\displaystyle k_l>0$, one has
\begin{equation*}
     A_iA^{\ast}_i(e_l^{k_l}\otimes e_{l-1}^{k_{l-1}}\otimes\cdots\otimes e_1^{k_1}) = \left\{
       \begin{array}{ll}
         0, & \hbox{\text{if $i<l$};} \\
         e_l^{k_l}\otimes e_{l-1}^{k_{l-1}}\otimes\cdots\otimes e_1^{k_1}, & \hbox{\text{if $i\geq l$.}}
       \end{array}
     \right.
\end{equation*}
On the other hand,
\begin{equation*}
\begin{split}
&\sum_{j=0}^iA^{\ast}_j A_j(e_l^{k_l}\otimes e_{l-1}^{k_{l-1}}\otimes\cdots\otimes e_1^{k_1}) \\
&=\left\{
       \begin{array}{ll}
         0, & \hbox{\text{if $i<l$};} \\
         A^{\ast}_l A_l (e_l^{k_l}\otimes e_{l-1}^{k_{l-1}}\otimes\cdots\otimes e_1^{k_1}), & \hbox{\text{if $i\geq l$.}}
       \end{array}
     \right.\\
     & =\left\{
       \begin{array}{ll}
         0, & \hbox{\text{if $i<l$};} \\
        e_l^{k_l}\otimes e_{l-1}^{k_{l-1}}\otimes\cdots\otimes e_1^{k_1}, & \hbox{\text{if $i\geq l$.}}
       \end{array}
     \right.
\end{split}
\end{equation*}
Concerning the last equality, from Remark \ref{rem:isometry}, it follows that $\displaystyle A_nA^{\ast}_n=I$.
\end{proof}
Observe that the non-zero partial isometries $S_i$ described in Section \ref{sec:prel} correspond to the operators $A^{\ast}_i$, $i\in\Sigma:=\{0,1,\ldots,n\}$, described in this section. In the following lines, we show that, for each $i\in\Sigma$, the operators $A^{\ast}_i$ satisfy the Cuntz-Krieger relations $CK1)$ and $CK2)$ for a suitable matrix $A$. \\
Therefore, let us consider $P_i:=A^{\ast}_iA_i$ and $Q_i:=A_iA^{\ast}_i$, for each $i\in\Sigma$. By \eqref{cr} it follows that $P_iP_j=0$, for each $i\neq j$. In addition, defining $A=(a_{ij})_{i,j\in\Sigma}\in \mathrm{M}_{n+1}(\mathbb{C})$ as
\begin{equation}
\label{matrixAn}
A:=
\begin{pmatrix}
1 & 0 & 0 & 0 & \cdots &0\\
1 & 1 & 0 & 0 & \cdots &0\\
  & \vdots & & &  \vdots \\
1 & 1 & 1 & 1 & \cdots & 1\\
\end{pmatrix},
\end{equation}
we observe that the conditions $CK2)$ are also satisfied for the given matrix $A$. More in detail, by Lemma \ref{lem:123}, for each $i=1,\ldots,n$, one has
  \begin{align*}
    Q_0 &=A_0A^{\ast}_0=A^{\ast}_0A_0=P_0=\sum_{j\in\Sigma} a_{0j}P_j\\
    Q_i &=A_iA^{\ast}_i =\sum_{j=0}^iA^{\ast}_jA_j= \sum_{j=0}^i P_j=  \sum_{j\in\Sigma} a_{ij}P_j
  \end{align*}
Notice that the $C^*$-algebra generated by the partial isometries $A_i$, $i\in\S$, coincide with $C^*(A_1,\ldots,A_n)$, since the orthogonal projection $A_0$ can be obtained by $A_i$ and $A^{\ast}_i$, by the condition $CK2)$.\\
Given the matrix $A$ defined in \eqref{matrixAn}, we can consider the abstract $C^*$-algebra $\mathcal{O}_A$, generated by $n+1$ non-zero partial isometries $a_i$, $i\in\Sigma$, which satisfy the conditions $CK1)$ and $CK2)$. The matrix $A$ does not satisfy the condition $CK3)$, then $\mathcal{O}_A$ has not the universal property. In addition, from $CK2)$, it follows that $a_0$ is normal, but we cannot assert that $a_0$ is positive. On the contrary, the orthogonal projection $A_0$ is positive. Therefore, let us consider the ideal of the algebra $\mathcal{O}_A$ generated by $a_0-a^{\ast}_0a_0$, $\mathcal{I}:=<a_0-a^{\ast}_0a_0>\trianglelefteq  \mathcal{O}_A$. The main result of this section consists of showing that the $C^*$-algebra $C^*(A_1,\ldots,A_n)$ is isomorphic to the quotient  $\mathcal{O}_A/\mathcal{I} $.\\
We start by making several remarks and by proving some preparatory results.
\begin{rem}
\label{rem:M1M2OA}
The relations $M1)$ and $M2)$ given in Remark \ref{rem:M1M2} still holds on the $C^*$-algebra $\mathcal{O}_A$ and on the quotient algebra $\mathcal{O}_A/\mathcal{I} $.
\end{rem}
Here we list some irreducible representations for the $C^*$-algebra $\mathcal{O}_A$.
\begin{itemize}
  \item Fix $z\in\mathbb{T}$ and denote by $\ch_{z}:=\mathbb{C}$ the complex space.  The one-dimensional irreducible representations $(\ch_{z},\pi_{z})$ on the $C^*$-algebra generated by $A_1,\ldots,A_n$, are given by
      \begin{align*}
       \pi_{z}(A_i)&=0 \quad \text{for each $i=0,\ldots,n-1$}   & \pi_{z}(A_n)&=z I\,.
      \end{align*}
  \item Fix $z\in\mathbb{T}$ and denote by $\ch_{z}:=\gf_{WM}(\ch)$ the weakly monotone Fock space. The Fock representations $(\ch_{z}, \pi^{z}_F)$ on the abstract Cuntz-Krieger $C^*$-algebra generated by the non-zero partial isometries $a_i$, $i\in\S$, are given by
      \begin{align}
      \label{fockRep}
              \pi^{z}_F(a_0)&=z P_{\Om}=z A_0 & \pi^{z}_F(a_i)&=A_i \quad \text{for each $i\in\{1,\ldots,n\}$}\,.
      \end{align}
\end{itemize}
Our next goal is to explicitly exhibit a
faithful representation for the $C^*$-algebra $\mathcal{O}_A$.\\
Let $A\in M_{n+1}(\mathbb{C})$ the matrix given in \eqref{matrixAn} and $\mathcal{O}_A$ the $C^*$-algebra generated by the partial isometries $a_i$, $i\in\S$, satisfying the conditions $CK1)$ and $CK2)$. For each $z\in\mathbb{T}$, denote by $\a_z\in\aut(\mathcal{O}_A)$ the natural gauge action of the torus $\mathbb{T}$ on $\mathcal{O}_A$, defined as follows:
\begin{equation*}
    \alpha_z(a_i):=za_i\quad \forall z\in\mathbb{T}\,, i\in\S\,.
\end{equation*}
We notice that for each $i\in\S$, the operators $a^{\prime}_i:=\alpha_z(a_i)=za_i$ are partial isometries and satisfy the Cuntz-Krieger relations $CK1)$ and $CK2)$. In addition, $\alpha_z\circ \alpha_w=\alpha_{zw}$ for each $z,w\in\mathbb{C}$.

\begin{rem}
\label{rem:gaugeFaith}
Let $\pi:\mathcal{O}_A\rightarrow \mathcal{B}(\ch)$ a representation of the $C^*$-algebra $\mathcal{O_A}$. Suppose that for every $z\in\mathbb{T}$ there exists an automorphism $\beta_z\in\aut(\pi(\mathcal{O_A}))$ such that $\beta_z(\pi(a_i))=z\pi(a_i)$, for each $i\in\S$. Then, by Theorem \cite[Theorem 2.3]{huef}, it follows that the representation $(\ch,\pi)$ is faithful.
\end{rem}

\begin{prop}
\label{prop:faithful}
Let $z\in\mathbb{T}$, $\ch_{z}:=\gf_{WM}(\ch)$ the weakly monotone Fock space, and $\pi^{z}_F$ the Fock representation given in \eqref{fockRep}. Then
$$
\bigg(\bigoplus_{z\in\mathbb{T}}\ch_{z},\bigoplus_{z\in\mathbb{T}}\pi^{z}_F\bigg)
$$
is a faithful representation for the $C^*$-algebra $\mathcal{O}_A$.
\end{prop}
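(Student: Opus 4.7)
The plan is to apply the gauge-invariant uniqueness criterion recalled in Remark \ref{rem:gaugeFaith}: for each $z\in\mathbb{T}$ I must exhibit an automorphism $\beta_z$ of $\pi(\mathcal{O}_A)$, where $\pi:=\bigoplus_{w\in\mathbb{T}}\pi^{w}_F$, such that $\beta_z(\pi(a_i))=z\pi(a_i)$ for every $i\in\Sigma$. As a preliminary step, one checks that each $\pi^{w}_F$ really extends to a $*$-representation of $\mathcal{O}_A$: the assigned images $\pi^{w}_F(a_0)=wA_0$ and $\pi^{w}_F(a_i)=A_i$ for $i\geq1$ are nonzero partial isometries, and Lemma \ref{lem:123} together with \eqref{cr} verifies the Cuntz--Krieger relations CK1) and CK2) for the matrix in \eqref{matrixAn} (note $|w|=1$, so $wA_0$ remains a partial isometry).

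The central step is to construct a unitary $V_z$ on $\bigoplus_{w\in\mathbb{T}}\ch_{w}$ that implements $\beta_z$, by combining two ingredients: a permutation of direct summands and the natural tensor-length grading of the weakly monotone Fock space. Write $\gf_{WM}(\ch)=\bigoplus_{k\geq0}\gf_{WM}^{(k)}$, with $\gf_{WM}^{(k)}$ spanned by the basis vectors of \eqref{basis} having $k_1+\cdots+k_n=k$, and let $U(z)\in\mathcal{B}(\gf_{WM}(\ch))$ be the unitary acting as the scalar $z^k$ on $\gf_{WM}^{(k)}$. A direct degree count gives $U(z^{-1})A_iU(z)=zA_i$ for $1\leq i\leq n$, whereas $U(z^{-1})A_0U(z)=A_0$, since $A_0$ is the projection onto $\gf_{WM}^{(0)}=\mathbb{C}\Omega$. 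Now define
\[
V_z\colon (\xi_w)_{w\in\mathbb{T}}\longmapsto \bigl(U(z^{-1})\,\xi_{zw}\bigr)_{w\in\mathbb{T}}.
\]
A short computation shows that $V_z\pi(a)V_z^{*}$ acts on the $w$-th summand by $U(z^{-1})\pi^{zw}_F(a)U(z)$. Evaluating on $a_0$ one obtains $U(z^{-1})(zwA_0)U(z)=zwA_0=z\,\pi^{w}_F(a_0)$, and on $a_i$ with $i\geq1$ one obtains $U(z^{-1})A_iU(z)=zA_i=z\,\pi^{w}_F(a_i)$. Hence $V_z\pi(a_i)V_z^{*}=z\pi(a_i)$ uniformly for all $i\in\Sigma$, so $\beta_z:=\mathrm{Ad}(V_z)|_{\pi(\mathcal{O}_A)}$ is the required gauge automorphism, and Remark \ref{rem:gaugeFaith} delivers the faithfulness of $\pi$.

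The main delicate point, which I expect to be the principal obstacle, is the asymmetry between $a_0$ and $a_i$ ($i\geq1$) inside the Fock representations: the gauge parameter $w$ enters $\pi^{w}_F(a_0)=wA_0$ explicitly but does not appear in $\pi^{w}_F(a_i)=A_i$. A pure permutation $w\mapsto zw$ of the summands would scale $\pi(a_0)$ by $z$ while leaving every $\pi(a_i)$ fixed; conversely, conjugating fibrewise by $U(z^{-1})$ alone would scale each $\pi(a_i)$ by $z$ while fixing $\pi(a_0)$. Neither operation alone produces an automorphism compatible with $\alpha_z$, and the proof consists essentially in recognising that the composition of the two does. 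Once this is in place, the remainder of the argument is a routine verification invoking the gauge-invariant uniqueness theorem.
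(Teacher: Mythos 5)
Your proposal is correct and follows essentially the same route as the paper: both check the Cuntz--Krieger relations for the fibre representations and then invoke the gauge-invariant uniqueness criterion of Remark \ref{rem:gaugeFaith}, after exhibiting a unitary on $\bigoplus_{w\in\mathbb{T}}\ch_{w}$ that implements the gauge action by combining a tensor-degree phase with a relabelling of the fibres. Your $V_z$, which permutes whole summands and conjugates fibrewise by the grading unitary $U(z^{-1})$, is just a slightly cleaner packaging of the paper's $U_w$ (which phases the excited vectors by $\bar{w}^{\,k_1+\cdots+k_n}$ and moves the vacua $\Om_{z}\mapsto\Om_{\bar{w}z}$), and in particular it handles the action on degree-one vectors, where an annihilator lands in the vacuum, unambiguously.
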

\begin{proof}
Denote by $\displaystyle\tilde{\pi}:=\bigoplus_{z\in\mathbb{T}}\pi^{z}_F$ the representation of the $C^*$-algebra $\mathcal{O}_A$ and we will show that, for each $w\in\mathbb{T}$, there exists a gauge automorphism $\gamma_w\in\aut(\tilde{\pi}(\mathcal{O}_A))$, such that $\gamma_w(\tilde{\pi}(a_i))=w\tilde{\pi}(a_i)$, for each $i\in\S$. Then the thesis follows by Remark \ref{rem:gaugeFaith}. \\
Let $\displaystyle \widetilde{\ch}:= \bigoplus_{z\in\mathbb{T}}\ch_{z}$ and denote by $\mathcal{U}(\widetilde{\ch})$ the set of all unitary operators on $\widetilde{\ch}$. We will prove that, for each $w\in\mathbb{T}$, there exists a unitary operator $U_w\in \mathcal{U}(\widetilde{\ch})$ such that, for each $i\in\S$, one has
\begin{equation*}
    U_w\tilde{\pi}(a_i)U_w^{\ast}=w\tilde{\pi}(a_i)\,.
\end{equation*}
For each $i\in\S$, denote by $\tilde{\beta}_i:=\tilde{\pi}(a_i)$. By \eqref{fockRep}, it follows that
\begin{align}
\label{betatilde}
  \tilde{\beta}_0&=\bigoplus_{z\in\mathbb{T}} z A_0 & &\text{and} &  \tilde{\beta}_i & =\bigoplus_{z\in\mathbb{T}} A_i\quad i=1,\ldots,n
\end{align}
For each $w\in \mathbb{T}$, we start by defining the unitary operator $U_w$, on the element of the basis of a single weakly monotone Fock space $\ch_{z}$, $z\in \mathbb{T}$, as follows:
\begin{align*}
U_w(e_n^{k_n}\otimes e_{n-1}^{k_{n-1}}\otimes\cdots\otimes e_1^{k_1}) &:=\bar{w}^{\sum_{i=1}^nk_i}e_n^{k_n}\otimes e_{n-1}^{k_{n-1}}\otimes\cdots\otimes e_1^{k_1}\,,\\
U_w(\Om_{z})&:=\Om_{\bar{w}z}\,,
\end{align*}
where $k_n, k_{n-1},\ldots,k_1\geq 0$, $\displaystyle \sum_{j=1}^n k_j>0$, $\Om_{z}$ denotes the vacuum vector on $\ch_{z}$ and $\bar{w}$ the complex conjugate of $w\in\mathbb{T}$. \\
It is easy to check that for each $w\in \mathbb{T}$, $U^{\ast}_w=U_{\bar{w}}$.\\
Fix $z\in\mathbb{T}$, denote by $\beta_i:=\pi^{z}_F(a_i)$, for each $i\in\S$. More in detail,
\begin{align*}
    \beta_0 &=z A_0 & &\text{and} &   \beta_i &= A_i\,,\quad i=1,\ldots,n\,.
\end{align*}
Fix $w\in \mathbb{T}$, we prove that $U_w\beta_i U^{\ast}_w = w\beta_i$, for each $i\in \S$. Indeed,
\begin{equation*}
    \begin{split}
       U_w\beta_0 U^{\ast}_w (\Om_{z}) &= U_w\beta_0 (\Om_{wz}) = U_w(wz \Om_{wz}) \\
                                            & = w z \Om_{\bar{w}wz}=w z \Om_{z}\\
                                            & =wz A_0(\Om_{z})=w \beta_0(\Om_{z})\,, \quad z \in \mathbb{T}\,.
                                            \end{split}
\end{equation*}
In addition, for each $i=1,\ldots,n$,
\begin{equation*}
    \begin{split}
       U_w\beta_i U^{\ast}_w (e_i^{k_i}\otimes e_{i-1}^{k_{i-1}}\otimes\cdots\otimes e_1^{k_1}) &= U_w\beta_i (w^{\sum_{j=1}^ik_j}e_i^{k_i}\otimes e_{i-1}^{k_{i-1}}\otimes\cdots\otimes e_1^{k_1}) \\
                                                                                                &= U_w(w^{\sum_{j=1}^ik_j}e_i^{k_i-1}\otimes e_{i-1}^{k_{i-1}}\otimes\cdots\otimes e_1^{k_1}) \\
                                                                                                &= w^{\sum_{j=1}^ik_j}\bar{w}^{\sum_{j=1}^{i}k_j-1}e_i^{k_i-1}\otimes e_{i-1}^{k_{i-1}}\otimes\cdots\otimes e_1^{k_1}\\
                                                                                                &= w^{\sum_{j=1}^ik_j}\bar{w}^{\sum_{j=1}^ik_j}\bar{w}^{-1}e_i^{k_i-1}\otimes e_{i-1}^{k_{i-1}}\otimes\cdots\otimes e_1^{k_1}\\
                                                                                                & = we_i^{k_i-1}\otimes e_{i-1}^{k_{i-1}}\otimes\cdots\otimes e_1^{k_1}\\
                                                                                                &= w A_i(e_i^{k_i}\otimes e_{i-1}^{k_{i-1}}\otimes\cdots\otimes e_1^{k_1})\\
                                                                                                &= w \beta_i(e_i^{k_i}\otimes e_{i-1}^{k_{i-1}}\otimes\cdots\otimes e_1^{k_1})\,.
 \end{split}
\end{equation*}
The assertion follows by extending the unitary operators $U_w$ on $\tilde{\ch}$.
\end{proof}
\begin{rem}
\label{rem:spectrum}
After denoting by $\sigma(a_0)$, the spectrum of the operator $a_0$, one has
$$
\sigma(a_0)=\mathbb{T}\cup \{0\}\,.
$$
Indeed, by Proposition \ref{prop:faithful}, it follows that $\sigma(a_0)=\sigma(\tilde{\beta}_0)$, where $\tilde{\beta}_0$ is given by \eqref{betatilde}. In addition, $\tilde{\beta}_0$ is a unitary operator on $\widetilde{\ch}$ and, after denoting by $\mathfrak{g}:=\spn\{\Om_{z}:z\in \mathbb{T}\}^{\bot}$, one has
\begin{align*}
\tilde{\beta}_0\Om_{z} &= \bigoplus_{z\in\mathbb{T}}z \Om_{z} \quad z\in\mathbb{T}\\
\tilde{\beta}_0\restriction_{\mathfrak{g}} &=0
\end{align*}
On the contrary, the spectrum of the operator $a_i$ is given by the disk $\bar{D}:=\{z\in\mathbb{C}: |z|\leq 1\}$, for each $i=1,\ldots,n$.
\end{rem}
\begin{thm}
\label{thm:faithfulQuotient}
Let $\pi_{F}:=\pi_{F}^{z}$ with $z =1$, the single Fock representation defined in \eqref{fockRep}. Then $\pi_F$ is a faithful representation for the quotient algebra $\displaystyle \mathcal{O}_A/\mathcal{I}$, where $\mathcal{I}=<a_0-a_0^{\ast}a_0>$.
\end{thm}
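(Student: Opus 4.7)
First, I verify that $\pi_F$ factors through the quotient. Since $\pi_F(a_0)=A_0=P_\Om$ is a self-adjoint projection on $\gf_{WM}(\ch)$, we have $\pi_F(a_0^\ast a_0)=A_0^2=A_0=\pi_F(a_0)$, so $\mathcal{I}=\langle a_0-a_0^\ast a_0\rangle\subseteq\ker\pi_F$, and $\pi_F$ descends to a $\ast$-homomorphism $\bar\pi_F\colon\mathcal{O}_A/\mathcal{I}\to\bh(\gf_{WM}(\ch))$. The content of the theorem is then the injectivity of $\bar\pi_F$, which I establish by a gauge-invariance argument patterned on the proof of Proposition \ref{prop:faithful}, using a modified $\bb$-action that is compatible with the extra relation $a_0=a_0^\ast a_0$.

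For each $w\in\bb$, I define $\tilde\a_w$ on the generators of $\mathcal{O}_A$ by $\tilde\a_w(a_0):=a_0$ and $\tilde\a_w(a_i):=wa_i$ for $i=1,\ldots,n$. Because $(wa_i)(wa_i)^\ast=a_ia_i^\ast$ and $(wa_i)^\ast(wa_i)=a_i^\ast a_i$, the images $\tilde\a_w(a_i)$ still satisfy $CK1)$--$CK2)$, so $\tilde\a_w$ extends to a $\ast$-automorphism of $\mathcal{O}_A$ with inverse $\tilde\a_{\bar w}$. Since $\tilde\a_w(a_0-a_0^\ast a_0)=a_0-a_0^\ast a_0$, the ideal $\mathcal{I}$ is preserved and $\tilde\a_w$ descends to an automorphism of $\mathcal{O}_A/\mathcal{I}$. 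To implement the descended action unitarily in the Fock representation, I set $U_w\Om:=\Om$ and
\[
U_w(e_n^{k_n}\otimes e_{n-1}^{k_{n-1}}\otimes\cdots\otimes e_1^{k_1}):=\bar w^{\,k_1+\cdots+k_n}\,e_n^{k_n}\otimes e_{n-1}^{k_{n-1}}\otimes\cdots\otimes e_1^{k_1}
\]
on the remaining basis vectors. Then $U_w$ is a diagonal unitary with $U_w^\ast=U_{\bar w}$. Since $U_w$ fixes $\Om$ and leaves $(\mathbb{C}\Om)^\perp$ invariant, one has $U_wA_0U_w^\ast=A_0$; the identity $U_wA_iU_w^\ast=wA_i$ for $i=1,\ldots,n$ follows from the same telescoping of phases $w^N\bar w^{N-1}=w$ used in Proposition \ref{prop:faithful}, the exponents recording the total tensor length before and after removing one $e_i$.

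With the $\bb$-action $\{\tilde\a_w\}$ implemented in $\bar\pi_F(\mathcal{O}_A/\mathcal{I})$ by the unitaries $\{U_w\}$, the proof is completed by invoking the gauge-invariance uniqueness theorem as in Remark \ref{rem:gaugeFaith}, applied now to $\mathcal{O}_A/\mathcal{I}$ viewed as the universal $C^\ast$-algebra generated by partial isometries satisfying $CK1)$, $CK2)$, and the additional relation that $a_0$ be a projection, equipped with the modified $\bb$-action $\tilde\a$. The principal obstacle I foresee lies here: the uniqueness theorem of \cite[Theorem 2.3]{huef} is stated for the standard gauge action $a_i\mapsto za_i$ on $\mathcal{O}_A$, so one must justify that it continues to apply to our quotient with the modified action that fixes $a_0$ rather than rotating it. Concretely, this reduces to checking that the conditional expectation obtained by averaging $\tilde\a$ over $\bb$ is faithful on $\bar\pi_F(\mathcal{O}_A/\mathcal{I})$, which should follow from the orthogonal grading of $\gf_{WM}(\ch)$ by the total tensor degree $k_1+\cdots+k_n$, on which the $U_w$ act by distinct scalars.
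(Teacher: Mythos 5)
Your reduction to the injectivity of the induced map $\bar\pi_F$ and your idea of a modified circle action fixing $a_0$ are reasonable, but the argument has a genuine gap exactly where you flag it, and the patch you sketch does not close it. First, the claim that $\tilde\a_w$ ``extends to a $\ast$-automorphism of $\mathcal{O}_A$'' because the images satisfy $CK1)$--$CK2)$ is not available here: the paper points out that the matrix $A$ fails condition $CK3)$, so $\mathcal{O}_A$ does \emph{not} have the universal property, and one cannot define morphisms by prescribing generator images satisfying the relations. (This could be repaired by implementing $\tilde\a_w$ spatially inside the faithful representation of Proposition \ref{prop:faithful}, but that repair is not in your text.) Second, and more seriously, \cite[Theorem 2.3]{huef} as used in Remark \ref{rem:gaugeFaith} concerns the canonical gauge action that rotates \emph{all} generators $a_i$, $i\in\S$; it says nothing about the quotient $\mathcal{O}_A/\mathcal{I}$ equipped with an action that fixes $a_0$. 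To invoke a gauge-invariant uniqueness theorem for that action you would first have to establish that $\mathcal{O}_A/\mathcal{I}$ is the \emph{universal} $C^*$-algebra for the relations ``$CK1)$, $CK2)$, $a_0=a_0^*a_0$'' (e.g.\ identify it with a graph algebra of a suitable graph), and then prove injectivity of $\bar\pi_F$ on the fixed-point algebra of $\tilde\a$ (the core), which for your action is strictly larger than the usual core since it contains $a_0$. Your closing suggestion --- that it suffices to check faithfulness of the averaged conditional expectation on $\bar\pi_F(\mathcal{O}_A/\mathcal{I})$, via the grading by total tensor degree --- is not the missing ingredient: faithfulness of the expectation on the image does not by itself yield injectivity of $\bar\pi_F$; one needs injectivity on the core together with the intertwining, and neither is proved.

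For comparison, the paper's proof avoids these issues entirely: it works inside the faithful direct-sum representation $\tilde\pi=\bigoplus_{z\in\mathbb{T}}\pi^{z}_F$ of Proposition \ref{prop:faithful}, considers the concrete subalgebra $\mathfrak{B}=C^*(\tilde{\beta}_0\tilde{\beta}_0^{\ast},\tilde{\beta}_1,\ldots,\tilde{\beta}_n)\subset\tilde\pi(\mathcal{O}_A)$, shows $\mathfrak{B}\cong\mathcal{O}_A/\mathcal{I}$ (the relation $a_0-a_0^*a_0$ is killed because $\tilde{\beta}_0\tilde{\beta}_0^{\ast}$ is a projection), and then observes that all the single Fock representations $\pi^{z}_F$ restrict to unitarily equivalent representations of $\mathfrak{B}$, so the $z=1$ Fock representation is already faithful there. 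If you want to salvage your route, the cleanest way is to prove a universal description of $\mathcal{O}_A/\mathcal{I}$ (as a relative Cuntz--Krieger or graph algebra) and then apply the corresponding gauge-invariant uniqueness theorem for the action fixing the vertex projection $a_0$; as written, your proof is incomplete at precisely that step.
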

\begin{proof}
Let $\mathfrak{A}:=C^*(\tilde{\beta}_0,\tilde{\beta}_1,\ldots,\tilde{\beta}_n)$, the subalgebra of $\mathcal{B}(\widetilde{\ch})$ generated by the operators $\tilde{\beta}_i$, $i\in\S$, given in \eqref{betatilde}. \\
Let $\mathfrak{B}:=C^*(\tilde{\beta}_0\tilde{\beta}_0^{\ast},\tilde{\beta}_1,\ldots,\tilde{\beta}_n)$ the subalgebra of $\mathfrak{A}$, generated by the operators $\tilde{\beta}_0\tilde{\beta}_0^{\ast},\tilde{\beta}_1,\ldots,\tilde{\beta}_n$.\\
By Remark \ref{rem:spectrum}, it follows that $\tilde{\beta}_0\tilde{\beta}_0^{\ast} \Om_{z}=\Omega_{z}$ for each $z \in \mathbb{T}$. In addition, the restriction of the Fock representations $\pi^{z}_F\restriction_{\mathfrak{B}}$ is isomorphic to the  restriction of the Fock representations $\pi^{z=1}_F\restriction_{\mathfrak{B}}$ on the same $C^*$-algebra $\mathfrak{B}$, for each $z\in\mathbb{T}$. Therefore $\pi^{z=1}_F$ is a faithful representation for the $C^*$-algebra $\mathfrak{B}$. It remains to prove that the $C^*$-algebra $\mathfrak{B}$ is isomorphic to $\displaystyle \mathcal{O}_A/\mathcal{I}$.\\
First, we observe that the operators $\tilde{\beta}_0\tilde{\beta}_0^{\ast},\tilde{\beta}_1,\ldots,\tilde{\beta}_n$ satisfy the Cuntz-Krieger conditions $CK1)$ and $CK2)$ for the given matrix $A$. Therefore, it is well defined an application $\Psi:\mathcal{O}_A\rightarrow \mathfrak{B}$, such that
\begin{align*}
    \Psi(a_0)&=\tilde{\beta}_0\tilde{\beta}_0^{\ast} & &\text{and} & \Psi(a_i)&=\tilde{\beta}_i \quad i=1,\ldots,n\,.
\end{align*}
Now we prove that for each $f\in\mathcal{I}$, one has $\Psi(f)=0$. Indeed
\begin{equation*}
    \begin{split}
       \Psi(a_0-a_0^{\ast}a_0) &= \tilde{\beta}_0\tilde{\beta}_0^{\ast}- (\tilde{\beta}_0\tilde{\beta}_0^{\ast})^{\ast}\tilde{\beta}_0\tilde{\beta}_0^{\ast}\\
                               &= \tilde{\beta}_0\tilde{\beta}_0^{\ast}-\tilde{\beta}_0\tilde{\beta}_0^{\ast}=0
    \end{split}
\end{equation*}
Then it is well defined the epimorphism $\displaystyle \psi:\mathcal{O}_A/\mathcal{I}\rightarrow \mathfrak{B}$ as follows:
\begin{align*}
    \psi([a_0])&=\tilde{\beta}_0\tilde{\beta}_0^{\ast} & \text{and}& &\psi([a_i])&=\tilde{\beta}_i \quad i=1,\ldots,n\,,
\end{align*}
where $[a_i]$ denotes the equivalence class for the operator $a_i$, for each $i\in\S$. Finally, we prove that $\psi$ is a bijection. To this end, let us define the application $\eta:\mathfrak{B}\rightarrow \mathcal{O}_A/\mathcal{I}$ as follows:
\begin{align*}
\eta(\tilde{\beta}_0\tilde{\beta}_0^{\ast})&=[a_0] & &\text{and} &  \eta(\tilde{\beta}_i)&=[a_i] \quad \,i=1,\ldots,n\,.
\end{align*}
We notice that $\mathfrak{B}\subset \mathcal{O}_A$ since $\tilde{\pi}$ is a faithful representation of $\mathcal{O}_A$ by Proposition \ref{prop:faithful} and $\mathfrak{B}\subset \mathfrak{A}\equiv\tilde{\pi}(\mathcal{O}_A)$. Therefore $\eta=p\circ i$, where $p$ is the projection from the $C^*$-algebra $\mathcal{O}_A$  to the quotient $\displaystyle \mathcal{O}_A/\mathcal{I}$ and $i$ denotes the immersion from $\mathfrak{B}$ to $\mathcal{O}_A$. As a consequence $\eta$ is a well defined continuous map from $\mathfrak{B}$ to $\mathcal{O}_A/\mathcal{I}$. To conclude the proof we prove that
\begin{align*}
\psi\circ\eta &=I_{\mathfrak{B}} &  &\text{and} & \eta\circ\psi &=I_{\mathcal{O}_A/\mathcal{I}}\,,
\end{align*}
where $I_{\mathfrak{B}}$ and $I_{\mathcal{O}_A/\mathcal{I}}$ denote respectively the identity operator on the algebras $\mathfrak{B}$ and $\mathcal{O}_A/\mathcal{I}$. More in detail,
    \begin{equation*}
     \begin{split}
        (\psi\circ\eta)(\tilde{\beta}_0\tilde{\beta}_0^{\ast}) &=\psi([a_0])=\tilde{\beta}_0\tilde{\beta}_0^{\ast}\\
        (\psi\circ\eta)(\tilde{\beta}_i) &=\psi([a_i])=\tilde{\beta}_i \quad i=1,\ldots,n\,,
      \end{split}
      \end{equation*}
          and
   \begin{equation*}
     \begin{split}
        (\eta\circ\psi)([a_0]) &= \eta(\tilde{\beta}_0\tilde{\beta}_0^{\ast}) =[a_0]\\
        (\eta\circ\psi)([a_i]) &= \eta(\tilde{\beta}_i) =[a_i]\quad i=1,\ldots,n\,.
      \end{split}
      \end{equation*}
\end{proof}
We conclude this section by observing that the quotient algebra $\displaystyle \mathcal{O}_A/\mathcal{I}$ fails to be a Cuntz-Krieger algebra, since the ideal $\mathcal{I}=<a_0-a_0^{\ast}a_0>$ is not gauge-invariant. 

\section{A MASA for the weakly-monotone $C^*$-algebra}
\label{sec:MASA}
In this section, we show that the diagonal subalgebra is a maximal abelian subalgebra, \emph{MASA} from now on, for the weakly monotone $C^*$-algebra $C^*(A_1,\ldots,A_n)$. We start by analyzing, in Section \ref{subsec:MASA2}, the case of $n=2$ generators, and successively, we extend the results to the $C^*$-algebra generated by $n$ annihilation operators.

\subsection{A MASA for the weakly-monotone $C^*$-algebra $C^*(A_1,A_2)$}
\label{subsec:MASA2}
This section aims to give a MASA for the $C^*$-algebra $C^*(A_1,A_2)$ generated by two annihilation operators acting on the weakly monotone Fock space $\mathfrak{F}_{WM}(\ch)$, with $\dim{\ch}=n=2$.\\
In this case, the basis $\mathbf{B}_2$ for $\gf_{WM}(\ch)$ is given by
\begin{equation*}
\mathbf{B}_2=\{\Om,\, e_2^{k}\otimes e_1^{l}: k, l\in \mathbb{N}, k+l>0\}\,.
\end{equation*}
Denote by $\S_2:=\{0,1,2\}$. Let $\alpha=(\alpha_1,\alpha_2,\ldots,\alpha_k)$, $\alpha_j\in\Sigma_2$, a multi-index and denote by $A_{\alpha}:=A_{\alpha_1}A_{\alpha_2}\cdots A_{\alpha_k}$. In addition, we consider the empty set $\emptyset$ a multi-index with the convention $A_{\emptyset}:=I$. Let us define
\begin{equation}
\label{algD2}
    \mathfrak{D}_2:=\{C^*(I,A^{\ast}_{\alpha}A_{\alpha}):\, \alpha=(\alpha_1,\alpha_2,\ldots,\alpha_k)\,, \a_i\in\S_2\forall i=1,\ldots,k\}\,.
\end{equation}
We observe that $\mathfrak{D}_2$ is a self-adjoint sub-algebra of $C^*(A_1,A_2)$ and, in the next lines, we prove that $\mathfrak{D}_2$ is a MASA.
\begin{rem}
\label{rem:linfinity}
Let us consider the Hilbert space $\ell_2(\mathbb{N})$ with orthonormal basis $\mathfrak{B}:=\{e_i:i\in \mathbb{N}\}$. For every $i\in \mathbb{N}$ denote by $P_i$ the orthogonal projection onto the linear span generated by the vector $e_i$. Then the diagonal MASA corresponding to the orthonormal basis $\mathfrak{B}$ is given by
$$
\mathcal{N}:=\left\{\sum_{i\in \mathbb{N}} x_iP_i : x=\{x_i\}_{i\in \mathbb{N}} \in \ell_{\infty}(\mathbb{N}) \right\}\,.
$$
In particular, $\ell_{\infty}(\mathbb{N})$ is the canonical diagonal MASA in $\mathcal{B}(\ell_2(\mathbb{N}))$ \cite{Takesaki1}. As is known, an expectation $\mathbf{E}:\mathcal{B}(\ell_2(\mathbb{N})) \rightarrow \ell_{\infty}(\mathbb{N})$ is well defined by
\begin{equation*}
\langle\mathbf{E}[T]e_i,e_j\rangle=\langle Te_i,e_j\rangle\delta_{ij}\,,\quad T\in \mathcal{B}(\ell_2(\mathbb{N}))
\end{equation*}
where $\langle\cdot,\cdot\rangle$ denotes the scalar product on $\ell_2(\mathbb{N})$ and $\delta_{ij}$ is the Kronecker symbol.
\end{rem}
To show that  $\mathfrak{D}_2$ is a maximal abelian subalgebra of $C^*(A_1, A_2)$, we start by proving two preparatory results.  For any pair $(k, l)$ with $k+l>0$, denote by $P_{k,l}$ the rank-one orthogonal projection onto $\mathbb{C}e_2^k\otimes e_1^l$.
 \begin{lem}
\label{lem:bicommutant2}
Denote by $\mathfrak{D}_2^{\prime\prime}$ the bicommutant (in the Fock representation) of the $C^*$-algebra $\mathfrak{D}_2$ defined in \eqref{algD2}. Then $\mathfrak{D}_2^{\prime\prime}$ coincides with the $C^*$-algebra of all diagonal operators.
\end{lem}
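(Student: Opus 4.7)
The strategy is to prove both inclusions in the claimed equality. Let $\mathcal{N}$ denote the von Neumann algebra of operators on $\gf_{WM}(\ch)$ that are diagonal with respect to the basis $\mathbf{B}_2$. After fixing a bijection of $\mathbf{B}_2$ with $\mathbb{N}$, Remark \ref{rem:linfinity} identifies $\mathcal{N}$ with $\ell_\infty(\mathbb{N})$ acting on $\ell_2(\mathbb{N})$ as a MASA, so $\mathcal{N}'' = \mathcal{N}$.

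For the inclusion $\D_2'' \subseteq \mathcal{N}$, I first observe that each annihilator $A_i$ (for $i\in\{1,2\}$) and the projection $A_0=P_\Om$ maps every element of $\mathbf{B}_2$ either to $0$ or to another element of $\mathbf{B}_2$, and is moreover injective on its support. An easy induction on the length of the multi-index $\alpha$ then shows that $A_\alpha$ is a partial isometry whose initial space is the closed linear span of the basis vectors it does not annihilate. Hence each generator $A_\alpha^* A_\alpha$ of $\D_2$ is a diagonal projection, so $\D_2 \subseteq \mathcal{N}$, and taking bicommutants yields $\D_2'' \subseteq \mathcal{N}'' = \mathcal{N}$.

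For the reverse inclusion I will produce every rank-one basis projection inside $\D_2$. Using the explicit action of $A_1,A_2$ on $\mathbf{B}_2$ together with \eqref{cr}, a direct computation gives, for all integers $k\geq 0$ and $m\geq 1$,
\begin{equation*}
(A_1^{m} A_2^k)^*(A_1^{m} A_2^k) = \sum_{l \geq m} P_{k,l},
\end{equation*}
and, for $k\geq 1$,
\begin{equation*}
(A_2^k)^* A_2^k = \sum_{j \geq k,\, l \geq 0} P_{j,l}.
\end{equation*}
Subtracting adjacent expressions within the $*$-algebra $\D_2$ isolates each rank-one projection:
\begin{align*}
P_{k,m} &= (A_1^m A_2^k)^*(A_1^m A_2^k) - (A_1^{m+1} A_2^k)^*(A_1^{m+1} A_2^k) \quad (k \geq 0,\ m \geq 1),\\
P_{k,0} &= (A_2^k)^* A_2^k - (A_2^{k+1})^* A_2^{k+1} - (A_1 A_2^k)^*(A_1 A_2^k) \quad (k \geq 1),
\end{align*}
while $P_\Om = A_0 \in \D_2$ directly (since $A_0 = A_0^* A_0$). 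Every bounded diagonal operator is the SOT-limit of a uniformly bounded net of finite linear combinations of the $P_{k,l}$ and $P_\Om$, and $\D_2''$ equals the SOT-closure of $\D_2$ by von Neumann's double commutant theorem, so $\mathcal{N} \subseteq \D_2''$.

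The main technical obstacle is the combinatorial verification of the initial-space identity for $A_1^m A_2^k$: one has to invoke that $A_1$ kills every basis vector whose leftmost tensor factor is $e_2$, which forces $A_1^m A_2^k$ to be non-zero on $e_2^a \otimes e_1^b$ precisely when $a=k$ and $b\geq m$. Once this check is carried out, the telescoping differences above are immediate and the two inclusions combine to give $\D_2'' = \mathcal{N}$.
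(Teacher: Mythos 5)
Your argument is correct and follows essentially the same route as the paper's proof: show that $P_{\Om}$ and every rank-one projection $P_{k,l}$ lie in $\mathfrak{D}_2$ and conclude by weak density of their span in the diagonal algebra, with the easy inclusion of $\mathfrak{D}_2$ into the diagonal MASA and the double-commutant step (which the paper leaves implicit) spelled out. One point worth recording: your two-case telescoping identities are in fact the correct ones, whereas the paper's single displayed formula $P_{k,l}=(A_2^{\ast})^k(A_1^{\ast})^lA_1^lA_2^k-(A_2^{\ast})^{k+1}(A_1^{\ast})^lA_1^lA_2^{k+1}-(A_2^{\ast})^k(A_1^{\ast})^{l+1}A_1^{l+1}A_2^k$ is valid only for $l=0$: for $l\geq1$ one has $(A_2^{\ast})^k(A_1^{\ast})^lA_1^lA_2^k=\sum_{b\geq l}P_{k,b}$ and $(A_2^{\ast})^{k+1}(A_1^{\ast})^lA_1^lA_2^{k+1}=\sum_{b\geq l}P_{k+1,b}$, so the middle term is orthogonal to the first and should be omitted, exactly as your case split does.
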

\begin{proof}
We will show that the orthogonal projections $P_{\Om}$ and $P_{k,l}$, $k+l>0$, belong to $\mathfrak{D}_2$. The statement will then follow since the linear span of the above projections is weakly dense in the algebra of all diagonal operators w.r.t. the canonical basis of the Fock space.\\
We notice that $P_{\Om}=A^{\ast}_0A_0$, and it is easy to see that for each $k+l>0$, one has
\begin{equation*}
    P_{k,l}={A^{\ast}_2}^k {A^{\ast}_1}^l {A_1}^l {A_2}^k -  {A^{\ast}_2}^{k+1} {A^{\ast}_1}^l {A_1}^l {A_2}^{k+1} - {A^{\ast}_2}^k {A^{\ast}_1}^{l+1} {A_1}^{l+1} {A_2}^k\,.
\end{equation*}
\end{proof}
We introduce the following notation concerning the elements of the basis $\mathbf{B}_2$ for the weakly monotone Fock space $\gf_{WM}(\ch)$.
\begin{align*}
    \Om &:=\varepsilon_{0,0}\\
    e_2^k\otimes e_1^l &:= \veps_{k,l} \,, k+l>0\,.
\end{align*}
\begin{lem}
\label{lem:expectation2}
Let $\mathbf{E}:\mathcal{B}(\gf_{WM}(\ch))\rightarrow \mathfrak{D}_2^{\prime\prime}$ the canonical expectation satisfying
\begin{equation*}
\langle\mathbf{E}[T]\veps_{i,j},\veps_{k,l}\rangle:=\langle T\veps_{i,j},\veps_{k,l}\rangle\delta_{ik}\delta_{jl}\,,\quad T\in\mathcal{B}(\gf_{WM}(\ch))\,.
\end{equation*}
Then $\mathbf{E}(C^*(A_1,A_2))\equiv \mathfrak{D}_2$.
\end{lem}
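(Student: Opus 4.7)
The equality is established by two inclusions.

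For $\mathfrak{D}_2 \subseteq \mathbf{E}(C^*(A_1,A_2))$: each generator $A^{\ast}_\alpha A_\alpha$ of $\mathfrak{D}_2$ is the orthogonal projection onto the initial space of the partial isometry $A_\alpha$, which is the closed span of a subset of the basis $\mathbf{B}_2$. Thus every element of $\mathfrak{D}_2 \subseteq C^*(A_1,A_2)$ is a diagonal operator and hence fixed by $\mathbf{E}$, giving $\mathfrak{D}_2 = \mathbf{E}(\mathfrak{D}_2) \subseteq \mathbf{E}(C^*(A_1,A_2))$.

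For the converse, I would first bring every word in $A_1, A_2, A_1^{\ast}, A_2^{\ast}$ into a normal form. Relation \eqref{cr} and its adjoint give $A^{\ast}_i A^{\ast}_j = 0$ and $A_j A_i = 0$ for $i<j$, as well as $A_i A^{\ast}_j = 0$ for $i \ne j$; for $n=2$ Lemma \ref{lem:123} further yields $A_1 A^{\ast}_1 = I - A^{\ast}_2 A_2$ and $A_2 A^{\ast}_2 = I$, both already in the desired order. Iterating these substitutions pushes every $A^{\ast}$ to the left of every $A$, so that any word rewrites as a finite linear combination of monomials $T(a,b,c,d) := (A^{\ast}_2)^a (A^{\ast}_1)^b A_1^d A_2^c$ with $a,b,c,d \geq 0$. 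Such linear combinations are norm-dense in $C^*(A_1,A_2)$; since $\mathbf{E}$ is contractive and $\mathfrak{D}_2$ is norm-closed, it suffices to verify that $\mathbf{E}(T(a,b,c,d)) \in \mathfrak{D}_2$ for all choices of exponents.

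A direct case analysis of $T(a,b,c,d)\veps_{k,l}$, using the explicit action of $A_i$ and $A^{\ast}_i$ on $e_2^k \otimes e_1^l$, shows that the image, when non-zero, is a basis vector of the form $e_2^a \otimes e_1^{l-d+b}$, and that $\langle T(a,b,c,d)\veps_{k,l}, \veps_{k,l}\rangle$ can be non-zero only when $a = c$ and $b = d$. In that case $T(a,b,c,d) = (A_1^b A_2^a)^{\ast}(A_1^b A_2^a) = A^{\ast}_\alpha A_\alpha$ with $\alpha = (\underbrace{1,\ldots,1}_{b}, \underbrace{2,\ldots,2}_{a})$, which is already diagonal and belongs to $\mathfrak{D}_2$; hence $\mathbf{E}(T(a,b,c,d)) = T(a,b,c,d) \in \mathfrak{D}_2$. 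Otherwise $\mathbf{E}(T(a,b,c,d)) = 0$, still in $\mathfrak{D}_2$. The main subtlety is this final case split, which must confirm that no off-diagonal monomial accidentally contributes a non-vanishing diagonal entry; all remaining steps are routine once normal form is in hand.
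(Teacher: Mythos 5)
Your proposal is correct and takes essentially the same route as the paper: reduce to monomials with all creators on the left and annihilators on the right, check that their diagonal matrix elements vanish unless the creator and annihilator parts coincide (in which case the monomial is one of the projections $A^{\ast}_{\alpha}A_{\alpha}\in\mathfrak{D}_2$), and conclude by density of such linear combinations together with contractivity of $\mathbf{E}$ — the paper phrases this with monomials $A^{\ast}_{\beta}A_{\alpha}$ over $\Sigma_2$ (allowing the index $0$, i.e. $P_{\Om}$) instead of your explicit normal form $(A^{\ast}_2)^a(A^{\ast}_1)^bA_1^dA_2^c$. One minor slip: for $b=d=0$ and $c<k$ the image of $e_2^k\otimes e_1^l$ is $e_2^{k-c+a}\otimes e_1^{l}$, not of the form $e_2^{a}\otimes e_1^{l-d+b}$, but the stated dichotomy (non-zero diagonal entry forces $a=c$ and $b=d$) still holds in that case, so the argument is unaffected.
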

\begin{proof}
Let $\a=(\a_1,\a_2,\ldots,\a_v)$ and $\beta=(\b_1,\b_2,\ldots,\b_w)$ be two multi-indices, with length $|v|$ and $|w|$ respectively, where $\a_i$, $\b_j\in\S_2$. For $T:=A^{\ast}_{\b}A_{\a}$ one has
\begin{equation*}
    \langle\mathbf{E}[A^{\ast}_{\b}A_{\a}]\veps_{i,j},\veps_{k,l}\rangle=\langle A^{\ast}_{\b}A_{\a}\veps_{i,j},\veps_{k,l}\rangle\delta_{ik}\delta_{jl}\,.
\end{equation*}
Suppose that $\a_i=\b_j=0$, for each $i=1,\ldots,v$, $j=1,\ldots,w$. Then
\begin{equation*}
    \langle A^{\ast}_{\b}A_{\a}\veps_{0,0},\veps_{0,0}\rangle =1\,.
\end{equation*}
In this case the lengths $|v|$ and $|w|$ can be different. Anyway, the operator $A^{\ast}_{\b}A_{\a}\in\mathfrak{D}_2$, since $A^{\ast}_0A_0=P_{\Om}$. \\

For the other cases, we need to prove that
\begin{equation}
\label{eq:diag} \tag{1}
    \langle A^{\ast}_{\b}A_{\a}\veps_{k,l},\veps_{k,l}\rangle=\left\{
       \begin{array}{ll}
         1, & \hbox{\text{if $\alpha=\beta$};} \\
         0, & \hbox{\text{if $\alpha\neq\beta$.}}
       \end{array}
     \right.
\end{equation}
Suppose that $\a=\emptyset$. Then $\langle A^{\ast}_{\b}(\veps_{0,0}),\veps_{0,0}\rangle=0$ if $\b\neq \emptyset$. Indeed,
\begin{equation*}
    A^{\ast}_{\b}(\veps_{0,0})=\left\{
       \begin{array}{ll}
         \veps_{\bar{k},\bar{l}}\,,\bar{k}+\bar{l}>0, & \hbox{\text{if $\b_w\geq\b_{w-1}\geq\ldots\geq\b_1$};} \\
         0, & \hbox{\text{otherwise.}}
       \end{array}
     \right.
\end{equation*}
If $\a\neq \emptyset$, one has $A^{\ast}_{\b}A_{\a}(\veps_{0,0})=0$. \\
Now we take into account $A^{\ast}_{\b}A_{\a}(\veps_{k,l})$, $k+l>0$. If $\a=\emptyset=\b$, then $A^{\ast}_{0}A_{0}(\veps_{k,l})=P_{\Om}(\veps_{k,l})=0$.\\
If $\a=\emptyset$ and $\b\neq\emptyset$, with $\b_w\geq \b_{w-1}\geq \ldots\geq\b_1$, then $A^{\ast}_{\b}(\veps_{k,l})=\veps_{\bar{k},\bar{l}}$, with $\bar{k}>k$ or/and $\bar{l}>l$. As a consequence, $\langle A^{\ast}_{\b}(\veps_{k,l}),\veps_{k,l}\rangle=0$. \\
We are reduced to proving \eqref{eq:diag} for $\a_i=1$ for each $i=1,\ldots,l^{\prime}$ and $\a_i=2$, for each $i=l^{\prime}+1,\ldots,v$, where $l^{\prime}\leq l$ and $k^{\prime}:=v-l^{\prime}\leq k$. Then,
\begin{equation*}
 A_{\a}(\veps_{k,l})=\left\{
       \begin{array}{ll}
       \veps_{0,0} & \hbox{\text{if $l^{\prime}=l$ and $k^{\prime}=k$};} \\
       \veps_{k-k^{\prime},l} & \hbox{\text{if $l^{\prime}=0$ and $k^{\prime}< k$};} \\
       \veps_{0,l-l^{\prime}} & \hbox{\text{if $l^{\prime}<l$ and $k^{\prime}= k$};}\\
       0 & \hbox{\text{otherwise}.}\\
       \end{array}
     \right.
\end{equation*}
If $\b=\a$, then $A^{\ast}_{\b}A_{\a}$ coincides with the identity in each of the previous cases and then $\langle A^{\ast}_{\b}A_{\a}(\veps_{k,l}),\veps_{k,l}\rangle=1$. Suppose now that $\b_j=1$ for each $j=1,\ldots,\bar{k}$ and $\b_j=2$, for each $j=\bar{k}+1,\ldots,w$, and denote by $\bar{l}=w-\bar{k}$. One has
\begin{align*}
  A^{\ast}_{\b}(\veps_{0,0}) &=  \veps_{\bar{l},\bar{k}}\,;  \\
 A^{\ast}_{\b}(\veps_{k-k^{\prime},l}) &= \left\{
       \begin{array}{ll}
       0 & \hbox{\text{if $\bar{k} \neq 0$};} \\
       \veps_{k-k^{\prime}+\bar{l},0} & \hbox{\text{if $\bar{k}=0$};}
       \end{array}
     \right.\\
A^{\ast}_{\b}(\veps_{0,l-l^{\prime}})&=\veps_{\bar{l},l-l^{\prime}+\bar{k}}\,.
\end{align*}
Therefore $\langle A^{\ast}_{\b}A_{\a}(\veps_{k,l}),\veps_{k,l}\rangle=0$ if $\bar{k}\neq k^{\prime}$ or $\bar{l}\neq l^{\prime}$.\
The computations above show that  $\mathbf{E}[A^{\ast}_{\b}A_{\a}]$  sits in $\mathfrak{D}_2$ for all multi-indices
$\a$ and $\b$. This readily implies the thesis by norm continuity of $\mathbf{E}$, as any $T$ in $C^*(A_1,A_2)$ is the norm limit
of a sequence of linear combinations of monomials of the type above.
\end{proof}

\begin{thm}
\label{thm:MASA2}
The sub-algebra $\mathfrak{D}_2$ is a MASA for the $C^*$-algebra $C^*(A_1,A_2)$.
\end{thm}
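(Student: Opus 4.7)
The plan is to assemble the two preparatory lemmas (Lemma \ref{lem:bicommutant2} and Lemma \ref{lem:expectation2}) into a short squeeze argument. First I would verify abelianness for free: $\mathfrak{D}_2 \subseteq \mathfrak{D}_2''$, and by Lemma \ref{lem:bicommutant2} the bicommutant is the algebra of all diagonal operators with respect to the basis $\mathbf{B}_2$, which is commutative. Hence $\mathfrak{D}_2$ is an abelian $*$-subalgebra of $C^*(A_1,A_2)$, and what remains is maximality: for every $T \in C^*(A_1,A_2)$ that commutes with every element of $\mathfrak{D}_2$, I need to conclude $T \in \mathfrak{D}_2$.

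The next step is to show that such a $T$ is forced to be a diagonal operator. The algebra of all diagonal operators on $\gf_{WM}(\ch)$ with respect to $\mathbf{B}_2$ is a MASA in $\mathcal{B}(\gf_{WM}(\ch))$ (this is the weakly monotone analogue of Remark \ref{rem:linfinity}), so it equals its own commutant. Combined with Lemma \ref{lem:bicommutant2}, this gives $\mathfrak{D}_2''' = \mathfrak{D}_2''$, and using the general identity $\mathfrak{D}_2' = \mathfrak{D}_2'''$ we obtain
\begin{equation*}
\mathfrak{D}_2' \;=\; \mathfrak{D}_2'' \;=\; \{\text{diagonal operators on } \gf_{WM}(\ch)\}.
\end{equation*}
Since $T$ commutes with $\mathfrak{D}_2$, we conclude $T \in \mathfrak{D}_2'$, i.e.\ $T$ is diagonal.

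Now I would invoke the conditional expectation $\mathbf{E}$ from Lemma \ref{lem:expectation2}. Diagonal operators are exactly the fixed points of $\mathbf{E}$ (from its defining formula $\langle \mathbf{E}[T]\veps_{i,j},\veps_{k,l}\rangle = \langle T\veps_{i,j},\veps_{k,l}\rangle \delta_{ik}\delta_{jl}$), so $\mathbf{E}(T) = T$. On the other hand $T \in C^*(A_1,A_2)$, and Lemma \ref{lem:expectation2} asserts $\mathbf{E}(C^*(A_1,A_2)) = \mathfrak{D}_2$. Therefore $T = \mathbf{E}(T) \in \mathfrak{D}_2$, which finishes the proof.

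The honest content of the theorem is entirely in the two lemmas: Lemma \ref{lem:bicommutant2} identifies $\mathfrak{D}_2''$ as the full diagonal MASA, and Lemma \ref{lem:expectation2} witnesses that $\mathfrak{D}_2$ is already large enough to absorb the image of the diagonal conditional expectation. The main obstacle is therefore conceptual rather than computational: one must make sure that the commutant chain $\mathfrak{D}_2 \subseteq \mathfrak{D}_2'' \subseteq \mathcal{B}(\gf_{WM}(\ch))$ is handled correctly so that $\mathfrak{D}_2' = \mathfrak{D}_2''$, which is the step where the full diagonal MASA property of $\mathcal{B}(\gf_{WM}(\ch))$ enters.
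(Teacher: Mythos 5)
Your proposal is correct and follows essentially the same route as the paper: identify $\mathfrak{D}_2'=\mathfrak{D}_2''$ via Lemma \ref{lem:bicommutant2} (the diagonal algebra being a MASA in $\mathcal{B}(\gf_{WM}(\ch))$), then squeeze any commuting $T\in C^*(A_1,A_2)$ through the expectation using $T=\mathbf{E}[T]$ and Lemma \ref{lem:expectation2}. The only difference is that you spell out the triple-commutant identity and the abelianness check, which the paper leaves implicit.
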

\begin{proof}
We need to prove that
\begin{equation*}
   \mathfrak{D}_2^{\prime}\cap C^*(A_1,A_2)=\mathfrak{D}_2\,,
\end{equation*}
where $\mathfrak{D}^{\prime}$ denotes the commutant of $\mathfrak{D}$ in $\mathcal{B}(\gf_{WM}(\ch))$.\\
By Lemma \ref{lem:bicommutant2}, it follows that the bicommutant $\mathfrak{D}_2^{\prime\prime}$ is a MASA of $\mathcal{B}(\gf_{WM}(\ch))$. Therefore, one has
\begin{equation}
\label{D2D1}
\mathfrak{D}_2^{\prime\prime}\equiv \mathfrak{D}_2^{\prime}\,.
\end{equation}
Fix $X\in \mathfrak{D}_2^{\prime}\cap C^*(A_1,A_2)$. By \eqref{D2D1}, it follows that $X\in \mathfrak{D}_2^{\prime\prime}\cap C^*(A_1,A_2)$. Since $X\in \mathfrak{D}_2^{\prime\prime}$, then $X=\mathbf{E}[X]$. On the other hand, $X\in C^*(A_1,A_2)$ implies $\mathbf{E}[X]\in \mathcal{D}_2$, by Lemma \ref{lem:expectation2}, and then $X\in \mathfrak{D}_2$.
\end{proof}

\subsection{A MASA for the weakly-monotone $C^*$-algebra $C^*(A_1,\ldots,A_n)$}
\label{subsec:MASAn}
In this section, we determine a MASA for the weakly monotone $C^*$-algebra generated by $n\geq 3$ annihilation operators $A_i$, $i=1,2,\ldots,n$ acting on the weakly monotone Fock space $\gf_{WM}(\ch)$ and we prove that
considerations similar to those made in the Section \ref{subsec:MASA2} can be obtained. \\

Let $\alpha=(\alpha_1,\alpha_2\ldots,\alpha_k)$, $\alpha_j\in\Sigma$, a multi-index and denote by $A_{\alpha}:=A_{\alpha_1}A_{\alpha_2}\ldots A_{\alpha_k}$. Denote by
\begin{equation}
\label{algD}
    \mathfrak{D}:=\{C^*(I,A^{\ast}_{\alpha}A_{\alpha}):\alpha=(\alpha_1,\alpha_2,\ldots,\alpha_k)\,,\a_i\in\S\, \forall i=1,\ldots,k\}\,.
\end{equation}
Following the same reasoning  as in Section \ref{subsec:MASA2}, we can prove that $\mathfrak{D}$ is a MASA for the $C^*$-algebra $C^*(A_1,A_2,\ldots,A_n)$.
In particular, following Lemma \ref{lem:bicommutant2}, let us denote  $P_{k_n, k_{n-1},\ldots,k_1}$ the orthogonal projection onto the subspace generated by $e_n^{k_n}\otimes e_{n-1}^{k_{n-1}}\otimes\cdots\otimes e_1^{k_1}$. \begin{lem}
\label{lem:bicommutant}
Denote by $\mathfrak{D}^{\prime\prime}$ the bicommutant (in the Fock representation) of the $C^*$-algebra $\mathfrak{D}$ defined in \eqref{algD}. Then $\mathfrak{D}^{\prime\prime}$ coincides with the $C^*$-algebra of all diagonal operators.
\end{lem}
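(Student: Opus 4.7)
The plan is to follow the blueprint of Lemma~\ref{lem:bicommutant2}. Let $\mathcal{N}\subseteq\mathcal{B}(\gf_{WM}(\ch))$ denote the $C^\ast$-algebra of all diagonal operators with respect to the canonical basis $\mathbf{B}$ of $\gf_{WM}(\ch)$, so that $\mathcal{N}$ is the diagonal MASA (the straightforward analogue of the $\ell_\infty(\mathbb{N})$ picture of Remark~\ref{rem:linfinity}). I would establish the two inclusions $\mathfrak{D}^{\prime\prime}\subseteq\mathcal{N}$ and $\mathcal{N}\subseteq\mathfrak{D}^{\prime\prime}$ separately; the second one reduces, by SOT-density in $\mathcal{N}$ of the linear span of the rank-one projections onto basis vectors, to showing that every such rank-one projection already lies in $\mathfrak{D}$.

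For the inclusion $\mathfrak{D}^{\prime\prime}\subseteq\mathcal{N}$ I would note that each annihilator $A_i$, $i=1,\ldots,n$, and the vacuum projection $A_0=P_{\Om}$ send every element of $\mathbf{B}$ to another element of $\mathbf{B}$ or to zero. Consequently any word $A_\alpha=A_{\alpha_1}\cdots A_{\alpha_k}$ with $\alpha_j\in\Sigma$ has the same property, and its initial projection $A^\ast_\alpha A_\alpha$ is a projection onto the closed span of a subset of $\mathbf{B}$, i.e.\ a diagonal operator. Hence $\mathfrak{D}\subseteq\mathcal{N}$, and $\mathcal{N}$ being a von Neumann algebra yields $\mathfrak{D}^{\prime\prime}\subseteq\mathcal{N}^{\prime\prime}=\mathcal{N}$.

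For the opposite inclusion, $P_{\Om}=A^\ast_0 A_0\in\mathfrak{D}$ takes care of the vacuum, and for a generic non-vacuum basis vector $e_n^{k_n}\otimes\cdots\otimes e_1^{k_1}$ I would use the multi-index
\[
\beta:=\bigl(0,\,\underbrace{1,\ldots,1}_{k_1},\,\underbrace{2,\ldots,2}_{k_2},\,\ldots,\,\underbrace{n,\ldots,n}_{k_n}\bigr),
\]
so that $A_\beta=A_0\cdot A_1^{k_1}A_2^{k_2}\cdots A_n^{k_n}$. The key claim is that $A_\beta$ annihilates every basis vector except $e_n^{k_n}\otimes\cdots\otimes e_1^{k_1}$, which it sends to $\Om$; equivalently, $A^\ast_\beta A_\beta$ equals the rank-one projection $P_{k_n,\ldots,k_1}$, and therefore lies in $\mathfrak{D}$. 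This is a clean substitute for the formula used in Lemma~\ref{lem:bicommutant2}, exploiting the fact that the index $0$ is available in $\Sigma$.

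The main obstacle, though modest in depth, is the verification of this last claim about $A_\beta$. Reading the word $\beta$ right-to-left, one must track how the topmost letter of the intermediate tensor evolves under each $A_i^{k_i}$, paying particular attention to indices $i$ for which $k_i=0$ (so that the factor $A_i^{k_i}$ is the identity and the "top" of the tensor word may skip past $i$). The conclusion is that $A_1^{k_1}\cdots A_n^{k_n}$ sends $e_n^{j_n}\otimes\cdots\otimes e_1^{j_1}$ to $\Om$ precisely when $(j_n,\ldots,j_1)=(k_n,\ldots,k_1)$, any other non-vanishing intermediate output being non-vacuum and therefore killed by the final $A_0=P_{\Om}$. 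This is a direct generalisation of the $n=2$ computation carried out in the proof of Lemma~\ref{lem:bicommutant2}, involving no conceptual novelty beyond careful book-keeping.
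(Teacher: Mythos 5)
Your proposal is correct, and its overall architecture is the same as the paper's: show that $P_{\Om}$ and the rank-one projections $P_{k_n,\ldots,k_1}$ onto basis vectors belong to $\mathfrak{D}$, then conclude by weak density of their span in the diagonal algebra (the inclusion $\mathfrak{D}''$ inside the diagonal von Neumann algebra, which you spell out explicitly, is left implicit in the paper). The one genuine difference is the device used to produce the rank-one projections. The paper takes the word $\a=(\underbrace{1,\ldots,1}_{k_1},\ldots,\underbrace{n,\ldots,n}_{k_n})$ without the letter $0$ and writes $P_{k_n,\ldots,k_1}=A^{\ast}_{\a}A_{\a}-\sum_{h=1}^{n}A^{\ast}_{\a_h}A_{\a_h}$, where $\a_h$ increments the $h$-th block by one — a difference formula generalizing the three-term expression used for $n=2$ in Lemma \ref{lem:bicommutant2}. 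You instead prepend $A_0=P_{\Om}$ to the word, and your key claim is accurate: since each annihilator strips the leftmost tensor factor or kills the vector, $A_1^{k_1}\cdots A_n^{k_n}$ sends a basis vector to $\Om$ exactly when its exponents are $(k_n,\ldots,k_1)$, any other nonzero output is non-vacuum and is killed by the final $A_0$, so $A_{\beta}^{\ast}A_{\beta}$ is already the rank-one projection. This buys a slightly cleaner argument (a single generator of $\mathfrak{D}$ rather than a difference of $n+1$ of them, hence no telescoping identity to verify), and it is legitimate because $0\in\S$ is an admissible letter in the multi-indices defining $\mathfrak{D}$ in \eqref{algD}; the paper's formula, by contrast, uses only the letters $1,\ldots,n$. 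Either way the bookkeeping is the same routine verification, so the two proofs are of essentially equal depth.
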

\begin{proof}
We proceed analogously to the proof of Lemma \ref{lem:bicommutant2} and we prove that the projections $P_{\Om}$ and $P_{k_n, k_{n-1},\ldots,k_1}$ belongs to $\mathfrak{D}$. Indeed, $P_{\Om}=A^*_0A_0$, and  
\begin{equation*}
   P_{k_n, k_{n-1},\ldots,k_1}=A^{\ast}_{\a}A_{\a}-\sum_{h=1}^{n}A^{\ast}_{\a_h}A_{\a_h}\,,
\end{equation*}
where
\begin{align*}
    \a &:=(\underbrace{1,\ldots,1}_{k_1},\underbrace{2,\ldots,2}_{k_2},\ldots,\underbrace{n,\ldots,n}_{k_n})\\
    \a_h &:=(\underbrace{1,\ldots,1}_{k_1},\ldots,\underbrace{h,\ldots,h}_{k_h+1},\ldots,\underbrace{n,\ldots,n}_{k_n})\,,
\end{align*}
for each $h=1,\ldots,n$.
\end{proof}
We introduce the following notation concerning the elements of the basis $\mathbf{B}$ for the weakly monotone Fock space $\gf_{WM}(\ch)$.
\begin{align*}
       &\Om := \veps_{\underbrace{0,\ldots,0}_n}\\
       &e_n^{k_n}\otimes e_{n-1}^{k_{n-1}}\otimes\cdots\otimes e_1^{k_1}:=\veps_{k_n, k_{n-1},\ldots,k_1}\,,
\end{align*}
where $k_n, k_{n-1},\ldots,k_1\geq 0$, with $\displaystyle \sum_{j=1}^n k_j>0$.
\begin{lem}
\label{lem:expectation}
Let $\mathbf{E}:\mathcal{B}(\gf_{WM}(\ch))\rightarrow \mathfrak{D}^{\prime\prime}$ the canonical expectation satisfying
\begin{equation*}
\langle\mathbf{E}[T]\veps_{i_n,\ldots,i_1},\veps_{k_n,\ldots,k_1}\rangle:=\langle T\veps_{i_n,\ldots,i_1},\veps_{k_n,\ldots,k_1}\rangle\delta_{i_nk_n}\cdots\delta_{i_1k_1}\,,\quad T\in\mathcal{B}(\gf_{WM}(\ch))\,.
\end{equation*}
Then $\mathbf{E}(C^*(A_1,\ldots,A_n))\equiv \mathfrak{D}$.
\end{lem}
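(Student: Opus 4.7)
The plan is to mirror the proof of Lemma \ref{lem:expectation2}, adapting the combinatorics to $n$ generators. By linearity and norm continuity of $\mathbf{E}$, together with the density in $C^*(A_1,\ldots,A_n)$ of the linear span of monomials $A^{\ast}_\beta A_\alpha$ with multi-indices $\alpha=(\alpha_1,\ldots,\alpha_v)$ and $\beta=(\beta_1,\ldots,\beta_w)$ over $\S=\{0,1,\ldots,n\}$, it suffices to show that $\mathbf{E}[A^{\ast}_\beta A_\alpha]\in\mathfrak{D}$ for every such pair; the inclusion $\mathfrak{D}\subset \mathbf{E}(C^*(A_1,\ldots,A_n))$ is immediate since every generator of $\mathfrak{D}$ is already diagonal.

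First I would describe the action of $A_\alpha$ on a basis element $\veps_{k_n,\ldots,k_1}$. The relations \eqref{cr} (namely $A_jA_i=0$ for $i<j$ and $A_iA^{\dag}_j=0$ for $i\neq j$), together with $A_0=P_\Om$, force $A_\alpha \veps_{k_n,\ldots,k_1}=0$ unless the non-zero entries of $\alpha$, read left to right, form a weakly decreasing sequence matching the leading factors of the basis vector and with level-$i$ multiplicity $r_i\leq k_i$. In that admissible case $A_\alpha\veps_{k_n,\ldots,k_1}=\veps_{k_n-r_n,\ldots,k_1-r_1}$, and any factors $A_0$ appearing in $\alpha$ act non-trivially only when the intermediate state is $\Om$. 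The symmetric statement holds for $A^{\ast}_\beta$: with level-$i$ multiplicity $s_i$ it either annihilates $\veps_{k_n-r_n,\ldots,k_1-r_1}$ or raises its exponents by $(s_n,\ldots,s_1)$.

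Next I would compute the diagonal matrix element $\langle A^{\ast}_\beta A_\alpha \veps_{k_n,\ldots,k_1}, \veps_{k_n,\ldots,k_1}\rangle$. For it to be non-zero, $A_\alpha$ must lower the exponents by $(r_n,\ldots,r_1)$ and $A^{\ast}_\beta$ must then restore them exactly, forcing $s_i=r_i$ for every $i\in\{1,\ldots,n\}$. Thus both $\alpha$ and $\beta$ reduce, up to trailing zeros, to a common sorted multi-index
\[
\gamma:=(\underbrace{n,\ldots,n}_{r_n},\underbrace{n-1,\ldots,n-1}_{r_{n-1}},\ldots,\underbrace{1,\ldots,1}_{r_1}),
\]
and on the non-vacuum part of $\gf_{WM}(\ch)$ the operator $\mathbf{E}[A^{\ast}_\beta A_\alpha]$ coincides with $A^{\ast}_\gamma A_\gamma$, which by definition belongs to $\mathfrak{D}$. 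The vacuum contribution is handled separately: $\langle A^{\ast}_\beta A_\alpha \Om,\Om\rangle=1$ precisely when both $\alpha$ and $\beta$ are either empty or consist only of $0$s, yielding a contribution $P_\Om=A^{\ast}_0 A_0\in\mathfrak{D}$, and vanishes otherwise. Norm continuity of $\mathbf{E}$ then extends the conclusion from monomials to all of $C^*(A_1,\ldots,A_n)$.

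The main obstacle will be keeping the bookkeeping of the case analysis transparent: the auxiliary index $0$ (which acts non-trivially only on $\Om$) and the many non-monotone arrangements of $\alpha$ and $\beta$ (where $A_\alpha$ or $A^{\ast}_\beta$ vanishes automatically by \eqref{cr}) multiply the number of sub-cases, exactly as already happened in the proof of Lemma \ref{lem:expectation2}. In the write-up I would treat in detail only the representative situation where $\alpha$ and $\beta$ are already in standard weakly decreasing form, and indicate that the remaining arrangements contribute zero by the commutation relations, which is enough to close the argument.
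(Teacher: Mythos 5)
Your proposal follows essentially the same route as the paper's proof: by linearity and norm continuity of $\mathbf{E}$ one reduces to monomials $A^{\ast}_{\beta}A_{\alpha}$, computes their diagonal matrix elements on the canonical basis, observes that the diagonal part is either $0$, the vacuum projection $A^{\ast}_0A_0$, or a projection of the form $A^{\ast}_{\gamma}A_{\gamma}\in\mathfrak{D}$ for a sorted word $\gamma$, and concludes; this is exactly the scheme of Lemma \ref{lem:expectation2} carried over to $n$ generators, as in the paper.

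One concrete correction is needed in your bookkeeping, however. With the paper's convention $A_{\alpha}=A_{\alpha_1}A_{\alpha_2}\cdots A_{\alpha_v}$ and the relations \eqref{cr} (namely $A_jA_i=0$ for $i<j$), a nonvanishing $A_{\alpha}$ requires $\alpha_1\leq\alpha_2\leq\cdots\leq\alpha_v$, i.e. the word read left to right must be weakly \emph{increasing} (the rightmost letter acts first and must match the leading, largest, tensor factor). Accordingly the common sorted word should be $\gamma=(1,\ldots,1,2,\ldots,2,\ldots,n,\ldots,n)$ with multiplicities $r_1,\ldots,r_n$, as in Lemma \ref{lem:bicommutant} and Section \ref{subsec:spectrum}. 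Your admissibility criterion (``weakly decreasing read left to right'') and your displayed $\gamma=(n,\ldots,n,\ldots,1,\ldots,1)$ have the orientation reversed: under the paper's conventions $A_{\gamma}$ is then identically zero as soon as two distinct indices occur (already $A_2A_1=0$ by \eqref{cr}), so the identification of the diagonal part with $A^{\ast}_{\gamma}A_{\gamma}$ fails as written. Reversing the order repairs the argument, and with that fix your proof coincides in substance with the paper's.
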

\begin{proof}
Let $\a=(\a_1,\a_2,\ldots,\a_v)$ and $\beta=(\b_1,\b_2,\ldots,\b_w)$ be two multi-indices, with length $|v|$ and $|w|$ respectively, where $\a_i$, $\b_j\in\S$.
For $T:=A^{\ast}_{\b}A_{\a}$ one has
\begin{equation*}
    \langle\mathbf{E}[A^{\ast}_{\b}A_{\a}]\veps_{i_n,\ldots,i_1},\veps_{k_n,\ldots,k_1}\rangle=\langle A^{\ast}_{\b}A_{\a}\veps_{i_n,\ldots,i_1},\veps_{k_n,\ldots,k_1}\rangle\delta_{i_nk_n}\cdots\delta_{i_1k_1}\,.
\end{equation*}
Suppose that $\a_i=\b_j=0$, for each $i=1,\ldots,v$, $j=1,\ldots,w$. Then
\begin{equation*}
    \langle A^{\ast}_{\b}A_{\a}\veps_{0,\ldots,0},\veps_{0,\ldots,0}\rangle =1\,.
\end{equation*}
In this case the lengths $|v|$ and $|w|$ can be different. Anyway, the operator $A^{\ast}_{\b}A_{\a}\in\mathfrak{D}$, since $A_0$ is self-adjoint and idempotent. \\
For the other cases, we need to prove that
\begin{equation*}
    \langle A^{\ast}_{\b}A_{\a}\veps_{i_n,\ldots,i_1},\veps_{k_n,\ldots,k_1}\rangle=\left\{
       \begin{array}{ll}
         1, & \hbox{\text{if $\alpha=\beta$};} \\
         0, & \hbox{\text{if $\alpha\neq\beta$.}}
       \end{array}
     \right.
\end{equation*}
Suppose that $\a=\emptyset$. Then $\langle A^{\ast}_{\b}(\veps_{0,\ldots,0}),\veps_{0,\ldots,0}\rangle=0$ if $\b\neq \emptyset$. Indeed,
\begin{equation*}
    A^{\ast}_{\b}(\veps_{0,\ldots,0})=\left\{
       \begin{array}{ll}
         \veps_{\bar{k}_n, \bar{k}_{n-1},\ldots,\bar{k}_1}\,,\sum_{j=1}^n \bar{k}_j>0, & \hbox{\text{if $\b_w\geq\b_{w-1}\geq\ldots\geq\b_1$};} \\
         0, & \hbox{\text{otherwise.}}
       \end{array}
     \right.
\end{equation*}
If $\a\neq \emptyset$, one has $A^{\ast}_{\b}A_{\a}(\veps_{0,\ldots,0})= 0$.\\
Now we take into account $A^{\ast}_{\b}A_{\a}(\veps_{k_n,\ldots,k_1})$, $\displaystyle \sum_{j=1}^n k_j>0$. If $\a=\emptyset=\b$, then $A^{\ast}_{0}A_{0}(\veps_{k_n,\ldots,k_1})=P_{\Om}(\veps_{k_n,\ldots,k_1})=0$.\\
If $\a=\emptyset$ and $\b\neq\emptyset$, with $\b_w\geq \b_{w-1}\geq \ldots\geq\b_1$, then, if $A^{\ast}_{\b}$ is not vanishing, on has $A^{\ast}_{\b}(\veps_{k_n,\ldots,k_1})=\veps_{\bar{k}_n, \bar{k}_{n-1},\ldots,\bar{k}_1}$, with $\bar{k}_i>k_i$ for at least one $i$. As a consequence, $\langle A^{\ast}_{\b}(\veps_{k_n,\ldots,k_1}),\veps_{k_n,\ldots,k_1}\rangle=0$.  \\
Suppose now that $\a_1\leq \a_2\leq\ldots\leq \a_v$. Then, $A_{\a}(\veps_{k_n,\ldots,k_1})=\veps_{0,\ldots,0}$ or $A_{\a}\veps_{k_n,\ldots,k_1}=\veps_{\bar{k}_n, \bar{k}_{n-1},\ldots,\bar{k}_1}$, with $\bar{k}_i\leq k_i$ for each $i$ and $\bar{k}_i< k_i$ for at least one $i$, or $A_{\a}\veps_{k_n,\ldots,k_1}=0$. \\
If $\b=\a$, then $A^{\ast}_{\b}A_{\a}$ coincides with the identity in each of the previous cases and then $\langle A^{\ast}_{\b}A_{\a}(\veps_{k_n,\ldots,k_1}),\veps_{k_n,\ldots,k_1}\rangle=1$. On the contrary, if there exists $j=1,\ldots,w$ s.t.  $\b_j\neq \a_j$, with $\b_w\geq\b_{w-1}\geq\ldots\geq\b_1$, then
\begin{equation*}
\begin{split}
  A^{\ast}_{\b}A_{\a}(\veps_{k_n,\ldots,k_1}) &=A^{\ast}_{\b} (\veps_{\bar{k}_n, \bar{k}_{n-1},\ldots,\bar{k}_1})\\
  &= \veps_{\tilde{k}_n, \tilde{k}_{n-1},\ldots,\tilde{k}_1} \,,
\end{split}
\end{equation*}
with $\tilde{k}_i\geq \bar{k}_i$ and $\tilde{k}_i\neq k_i$ for at least one $i$. Therefore $\langle A^{\ast}_{\b}A_{\a}(\veps_{k_n,\ldots,k_1}),\veps_{k_n,\ldots,k_1}\rangle=0$.\\
The assertion follows as in the proof of Lemma \ref{lem:expectation2}. 
\end{proof}

\begin{thm}
\label{thm:MASA}
The sub-algebra $\mathfrak{D}$ is a MASA for the $C^*$-algebra $C^*(A_1,\ldots,A_n)$.
\end{thm}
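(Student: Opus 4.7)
The plan is to mirror the argument of Theorem \ref{thm:MASA2} verbatim, since both of the necessary ingredients for the general $n$ case have already been established. Specifically, I would use Lemma \ref{lem:bicommutant} (which identifies $\mathfrak{D}''$ with the algebra of all diagonal operators with respect to the canonical basis of $\gf_{WM}(\ch)$) together with Lemma \ref{lem:expectation} (which shows that the canonical diagonal expectation $\mathbf{E}$ maps $C^*(A_1,\ldots,A_n)$ onto $\mathfrak{D}$).

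The first step is to record that, because the full diagonal algebra is known to be a MASA in $\mathcal{B}(\ell_2(\mathbb{N}))$ (Remark \ref{rem:linfinity}) and $\gf_{WM}(\ch)$ is a separable Hilbert space with the distinguished orthonormal basis $\mathbf{B}$, Lemma \ref{lem:bicommutant} yields
\begin{equation*}
\mathfrak{D}'' = \mathfrak{D}'.
\end{equation*}
The second step is the standard containment chase: I would fix $X \in \mathfrak{D}' \cap C^*(A_1,\ldots,A_n)$. From the identification above, $X$ lies in $\mathfrak{D}''$, and since $\mathbf{E}$ is the identity on $\mathfrak{D}''$, one has $X = \mathbf{E}[X]$. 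On the other hand $X \in C^*(A_1,\ldots,A_n)$, so Lemma \ref{lem:expectation} gives $\mathbf{E}[X] \in \mathfrak{D}$. Combining these two equalities yields $X \in \mathfrak{D}$, which is the nontrivial inclusion $\mathfrak{D}' \cap C^*(A_1,\ldots,A_n) \subseteq \mathfrak{D}$; the reverse inclusion is automatic since $\mathfrak{D}$ is abelian. This is exactly the proof of Theorem \ref{thm:MASA2} with the index $2$ replaced by $n$.

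There is really no obstacle at this stage, precisely because the two preparatory lemmas have been stated in the full generality needed. The only mild point to verify is that the canonical expectation $\mathbf{E}$ onto the diagonal is well defined and norm continuous on all of $\mathcal{B}(\gf_{WM}(\ch))$; this is guaranteed by the general theory recalled in Remark \ref{rem:linfinity}, applied to the separable Hilbert space $\gf_{WM}(\ch)$ equipped with the orthonormal basis $\mathbf{B}$ of \eqref{basis}. Hence the proof will consist of just a few lines assembling these facts.
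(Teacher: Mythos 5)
Your proposal is correct and follows exactly the same route the paper takes: the paper proves Theorem \ref{thm:MASA} by invoking Lemma \ref{lem:bicommutant} and Lemma \ref{lem:expectation} and repeating verbatim the argument of Theorem \ref{thm:MASA2} (the identification $\mathfrak{D}''=\mathfrak{D}'$ followed by $X=\mathbf{E}[X]\in\mathfrak{D}$ for $X\in\mathfrak{D}'\cap C^*(A_1,\ldots,A_n)$). Nothing is missing.
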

The proof follows by Lemma \ref{lem:bicommutant} and Lemma \ref{lem:expectation}, following the same reasoning as in the proof of Theorem \ref{thm:MASA2}.

\subsection{Spectrum of the MASA}
\label{subsec:spectrum}
The maximal abelian subalgebra $\D_n\subset C^*(A_1, \ldots, A_n)$ is unital, hence it is isomorphic to the $C^*$-algebra $\D_n \cong C(\DN)$ of continuous functions on a compact space $\DN$, called the Gelfand space (or the spectrum). For the description of $\DN$, we shall use some additional notation. 
Let $\M_n=(\N_0)^n$ be the collection of all sequences of $n$ nonnegative integers $\M_n:=\{(\mu_1, \ldots , \mu_n): \mu_j\in \N\cup\{0\}, j=1,\ldots , n\}$. Given $\mu\in\M_n$ we consider partial isometries 
\[
A_{\mu}:= (A_{1})^{\mu_1}\cdots (A_{n})^{\mu_n}, \quad  
A_{\mu}^*:= (A_{n}^*)^{\mu_n}\cdots (A_{1}^{*})^{\mu_1}
\]
and orthogonal projections 
\[
P_{\mu}:=A_{\mu}^*A_{\mu}, \quad P_{\mu}^0:=A_{\mu}^*P_{\Omega}A_{\mu}.
\]
Then $\D_n$ is the closure of the linear span of $\{P_{\mu}, P_{\mu}^0:\ \mu\in \M_n\}$. Comparison of these projections can be described by some partial order on the indexes. 
\begin{defin}
\label{def:indexes}
For $\mu:=(\mu_1, \ldots , \mu_n)\neq (\nu_1, \ldots , \nu_n)=\nu \in \M_n$ we set $\nu \prec \mu$ if there exists $k\in \{2, \ldots , n-1\}$ for which 
\begin{eqnarray*}
\nu_j=\mu_j&\text{for}& k+1\leq j \leq n \\
\nu_k<\mu_k && \\ 
0=\nu_j\leq \mu_j &\text{for}& 1\leq j \leq k-1.
\end{eqnarray*}
\end{defin}
This relation describes nontrivial products of two projections. 

\begin{prop}
\label{prop:projprod}
For $\mu\neq\nu\in\M_n$ assume that $P_{\mu}P_{\nu}\neq 0$. Then $P_{\mu}P_{\nu}=P_{\mu}$  if and only if   $\nu\prec \mu$.
\end{prop}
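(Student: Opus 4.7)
The plan is to describe the range of the projection $P_{\mu}$ explicitly in the Fock representation and then translate $P_{\mu}P_{\nu}=P_{\mu}$ into an inclusion of subspaces. For $\mu\in\M_n\setminus\{0\}$, set $m_{\mu}:=\min\{i:\mu_i>0\}$. A direct calculation using the relations \eqref{cr} shows that $A_{\mu}\veps_{k_n,\ldots,k_1}\neq 0$ precisely when $k_j=\mu_j$ for every $j>m_{\mu}$ and $k_{m_{\mu}}\geq\mu_{m_{\mu}}$ (the indices $k_j$ with $j<m_{\mu}$ remaining arbitrary). Consequently the range of $P_{\mu}=A_{\mu}^{*}A_{\mu}$ is the closed span of those basis vectors, and the identity $P_{\mu}P_{\nu}=P_{\mu}$ is equivalent to the range of $P_{\mu}$ being contained in that of $P_{\nu}$. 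I write $m:=m_{\mu}$ and $\ell:=m_{\nu}$ throughout.

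For the ``if'' direction, assume $\nu\prec\mu$ with witnessing index $k$. From $\nu_k<\mu_k$ one gets $\mu_k>0$, hence $k\geq m$; from $\nu_j=0$ for $j<k$ one gets $\ell\geq k\geq m$. Now pick any $\veps_x$ in the range of $P_{\mu}$, so $x_j=\mu_j$ for $j>m$ and $x_m\geq\mu_m$. For $j>\ell$ one has $x_j=\mu_j=\nu_j$ by $\nu\prec\mu$ combined with $j>\ell\geq m$. For $j=\ell$ either $\ell=k$, and then $x_{\ell}\geq\mu_{\ell}=\mu_k>\nu_k=\nu_{\ell}$ (using $x_m\geq\mu_m$ when $\ell=m$ and $x_{\ell}=\mu_{\ell}$ when $\ell>m$); or $\ell>k$, and then $\nu_{\ell}=\mu_{\ell}$ from $\nu\prec\mu$, while $x_{\ell}=\mu_{\ell}$ since $\ell>k\geq m$. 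In both cases $\veps_x$ lies in the range of $P_{\nu}$.

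For the ``only if'' direction, assume the range inclusion and test it on the extremal vector $\veps^{*}:=\veps_{\mu_n,\ldots,\mu_m,0,\ldots,0}$. If $\ell<m$ then $\veps^{*}_{\ell}=0<\nu_{\ell}$, contradicting membership in the range of $P_{\nu}$; hence $\ell\geq m$, $\mu_j=\nu_j$ for all $j>\ell$, and $\mu_{\ell}\geq\nu_{\ell}$. If $\mu_{\ell}>\nu_{\ell}$, the choice $k=\ell$ satisfies all three conditions of Definition \ref{def:indexes}. If instead $\mu_{\ell}=\nu_{\ell}$, then $\mu$ and $\nu$ already agree for all $j\geq\ell$; since $\mu\neq\nu$ and $\nu_j=0$ for $j<\ell$, there must exist $j<\ell$ with $\mu_j>0$. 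Take $k$ to be the \emph{largest} such index; then $\mu_j=0=\nu_j$ for $k<j<\ell$, so $\mu_j=\nu_j$ for every $j>k$, while $\nu_k=0<\mu_k$ and $\nu_j=0$ for $j<k$. Thus $\nu\prec\mu$ in both subcases.

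The main obstacle is precisely the second subcase of the ``only if'' direction: the witness $k$ required by Definition \ref{def:indexes} need not coincide with $m_{\nu}$ but can be strictly smaller, and extracting it requires observing that $\mu$ is forced to carry a nonzero coordinate strictly below $\ell$. All the remaining case work amounts to a straightforward unfolding of the peel-off action of the weakly monotone annihilation operators on the basis vectors.
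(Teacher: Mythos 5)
Your proof is correct, but it follows a genuinely different route from the paper. The paper argues purely algebraically with the operators: it first treats the two-block case, showing via the relations \eqref{cr} that a nonzero product forces the top exponents to agree, and then in general factors $P_{\mu}$ and $P_{\nu}$ through the common tail $B_{\mu}=A_{k+1}^{\mu_{k+1}}\cdots A_n^{\mu_n}$ at the highest index $k$ of disagreement, analysing $A_k^{\mu_k}(A_k^*)^{\nu_k}$ and concluding that nonvanishing forces either $C_{\mu}=1$ (giving $P_{\mu}P_{\nu}=P_{\nu}$) or $D_{\nu}=1$ (giving $P_{\mu}P_{\nu}=P_{\mu}$, i.e.\ $\nu\prec\mu$), the two cases being exchanged under $\mu\leftrightarrow\nu$. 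You instead work spatially in the Fock representation: you identify $\mathrm{ran}\,P_{\mu}$ as the closed span of the basis vectors $\veps_{k_n,\ldots,k_1}$ with $k_j=\mu_j$ for $j>m_{\mu}$ and $k_{m_{\mu}}\geq\mu_{m_{\mu}}$, convert $P_{\mu}P_{\nu}=P_{\mu}$ into range inclusion, and finish by combinatorics on the multi-indices, testing on the extremal vector $\veps^{*}$ in the converse direction. This buys a more transparent and, in the ``only if'' direction, arguably more complete argument (you exhibit the witness $k$ explicitly, noting it may be strictly smaller than $m_{\nu}$, whereas the paper's witness is the top index of disagreement and its converse step is left somewhat implicit); the paper's computation, on the other hand, stays at the level of the abstract relations and so does not rely on the concrete basis description. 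Two small caveats, shared with the paper's own treatment and harmless: your argument tacitly assumes $\nu\neq 0$ (so that $m_{\nu}$ is defined; the case $P_{\nu}=I$ is trivial), and the witness $k$ you produce may be $1$ or $n$, which conflicts only with the literal restriction $k\in\{2,\ldots,n-1\}$ in Definition \ref{def:indexes} that the paper's proof itself also ignores.
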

\begin{proof}
We start with the following simple observation. Let $1\leq i,j,k,m \leq n$ be so that $i<j$ and $k<m$, and define two projections
\begin{eqnarray*}
P_{\mu}&:=&(A_j^*)^{\mu_j}(A_i^*)^{\mu_i}A_i^{\mu_i}A_j^{\mu_j}, \quad \mu_i,\mu_j \geq 1\\  P_{\nu}&:=&(A_m^*)^{\nu_m}(A_k^*)^{\nu_k}A_k^{\nu_k}A_m^{\nu_m}, \quad \nu_k, \nu_m\geq 1
\end{eqnarray*}
Then for their product 
\[
P_{\mu}P_{\nu}=(A_j^*)^{\mu_j}(A_i^*)^{\mu_i}A_i^{\mu_i}\big[A_j^{\mu_j} (A_m^*)^{\nu_m}\big](A_k^*)^{\nu_k}A_k^{\nu_k}A_m^{\nu_m}
\]
to be nonzero it must be $m=j$, otherwise $A_j^{\mu_j} (A_m^*)^{\nu_m}=0$, since $A_m^*$ is creation by $e_m$, which is annihilated by $A_j$ for $j\neq m$. Moreover, assuming now $j=m$, if $\mu_j > \nu_j$ then $A_j^{\mu_j} (A_j^*)^{\nu_j}=A_j^{c_j}$ with $c_j:=\mu_j-\nu_j\geq 1$, and then $A_j^{c_j}(A_k^*)^{\nu_k}=0$, since $j=m>k$. Similarly, if $\mu_j < \nu_j$ then $A_j^{\mu_j} (A_j^*)^{\nu_j}=(A_j^*)^{d_j}$ with $d_j:=\nu_j-\mu_j\geq 1$, and consequently $A_i^{\mu_i}(A_j^*)^{d_j}=0$, since $j>i$. 

Therefore, for $P_{\mu}P_{\nu}\neq 0$ it must be $j=m$ and $\mu_j=\nu_j$, which implies that the projection $A_j^{\mu_j} (A_j^*)^{\mu_j}$ can be removed from the product, resulting in  
\[
P_{\mu}P_{\nu}=(A_j^*)^{\mu_j}(A_i^*)^{\mu_i}A_i^{\mu_i}(A_k^*)^{\nu_k}A_k^{\nu_k}A_j^{\mu_j}
\]
The same arguments as above apply to $A_i^{\mu_i}(A_k^*)^{\nu_k}$ resulting in the conclusion $i=k$ and $\mu_i=\nu_i$. This means that $\mu=\nu$ and the product has the form 
\[
P_{\mu}P_{\nu}=(A_j^*)^{\mu_j}(A_i^*)^{\mu_i}A_i^{\mu_i}A_j^{\mu_j}=P_{\mu}^2=P_{\mu}.
\]

In the general case, if $\mu=(\mu_1, \ldots , \mu_n)$ and $\nu=(\nu_1, \ldots , \nu_n)$, then it follows from the above considerations that $\mu_n\neq\nu_n$ implies $P_{\mu}P_{\nu}=0$. Hence if $\mu\neq \nu$ and $P_{\mu}P_{\nu}\neq 0$, then there exists the least $2\leq k\leq n-1$ such that $\mu_j=\nu_j$ for $j=k+1, \ldots n$, or $k=n$. In such case $\mu_k\neq\nu_k$ and we can write 
\begin{eqnarray*}
P_{\mu}&=&(A_{n}^*)^{\mu_n}\dots (A_{1}^{*})^{\mu_1} A_{1}^{\mu_1}\dots A_{n}^{\mu_n} =B_{\mu}^*(A_k^*)^{\mu_k}C_{\mu}^*C_{\mu}A_k^{\mu_k}B_{\mu} \, \\ 
P_{\nu}&=&(A_{n}^*)^{\nu_n}\dots (A_{1}^{*})^{\nu_1} A_{1}^{\nu_1}\dots A_{n}^{\nu_n} =B_{\mu}^*(A_k^*)^{\nu_k}D_{\nu}^*D_{\nu}A_k^{\nu_k}B_{\mu}\, \\
B_{\mu}&=&A_{k+1}^{\mu_{k+1}}\dots A_{n}^{\mu_n} \,\\
C_{\mu}&=& A_{1}^{\mu_{1}}\dots A_{k-1}^{\mu_{k-1}}, \\  
D_{\nu}&=& A_{1}^{\nu_{1}}\dots A_{k-1}^{\nu_{k-1}} \\
P_{\mu}P_{\nu}&=&B_{\mu}^*(A_k^*)^{\mu_k}C_{\mu}^*C_{\mu} A_k^{\mu_k} (A_k^*)^{\nu_k}D_{\nu}^*D_{\nu}A_k^{\nu_k}B. 
\end{eqnarray*}

There are two cases: 
\begin{enumerate}
    \item $\mu_k<\nu_k$ and then $A_k^{\mu_k} (A_k^*)^{\nu_k}=(A_k^*)^{d_k}$ with $d_k=\nu_k-\mu_k\geq 1$; in such case $C_{\mu}(A_k^*)^{d_k}=0$ except $\mu_1=\dots = \mu_{k-1}=0$, i.e. when $C_{\mu}=1$. This would imply $P_{\mu}P_{\nu}=P_{\nu}$, since then $(A_k^*)^{\mu_k}A_k^{\mu_k} (A_k^*)^{\nu_k}=(A_k^*)^{\nu_k} $ .
    \item $\mu_k >\nu_k$ and then $A_k^{\mu_k} (A_k^*)^{\nu_k}=A_k^{c_k}$ with $c_k=\mu_k-\nu_k\geq 1$; in such case $A_k^{c_k}D_{\nu}^*=0$ except $\nu_1=\dots = \nu_{k-1}=0$, i.e. when $D_{\nu}=1$.  This would imply $P_{\mu}P_{\nu}=P_{\mu}$, since then 
    $A_k^{\mu_k}(A_k^*)^{\nu_k}A_k^{\nu_k}=A_k^{\mu_k}$.
\end{enumerate}
Hence both cases are equivalent by the exchange $\mu \leftrightarrow \nu$ and conclude the proof.
\end{proof}

For each $\mu\in\M_n$ the projection $P_{\mu}$ (as well as $P_{\mu}^0$) define a multiplicative functional 
$\vp_{\mu}\in \DN$ 
\begin{equation}\label{functionals}
\vp_{\mu}(P_{\nu}^0)=\vp_{\mu}(P_{\nu}):= 
\begin{cases}
1 & \text{if} \quad \nu \preceq \mu \\ 
0 & \text{otherwise} 
\end{cases}
\end{equation}
with the two exceptional cases:
\begin{eqnarray*}
P_{\Omega} &\text{defines}& \vp_0(P_{\Omega})=1, \vp_0(P_{\nu})=0 \quad \text{otherwise} \\ 
P_1=1 &\text{defines}& \vp_1(P_{\nu})=1 \quad \text{for all} \quad \nu\in\M_n, 
\end{eqnarray*}
where $P_1=1$ is the identity operator. These functionals form a dense discrete subset of $\DN^0\subset \DN$ (for the topology of pointwise convergence). The definition \eqref{functionals} is a consequence of the following observation: if $\vp$ is a multiplicative functional such that $\vp(P_{\mu})=1$ for some $\mu$, then for $\nu \prec \mu$ it is $P_{\mu}P_{\nu}=P_{\mu}$, hence it must be
\[
1=\vp(P_{\mu})=\vp(P_{\mu}P_{\nu})=\vp(P_{\mu})\vp(P_{\nu})=\vp(P_{\nu}).
\]

Now we shall identify this subset $\DN^0$ with a subset $\DD_n^0$ of the $n$-dimensional unit cube $\DN^0\cong \DD_n^0\subset [0,1]^n\subset \R^n$. For this purpose, given $\mu=(\mu_1, \ldots , \mu_n)\in \M_n$ and $k\in \{1,\ldots , n\}$, we define 
\[
r_k=r_k(\mu):= \begin{cases}
\mu_k+\dots + \mu_n & \text{if} \quad \mu_k\neq 0 \\ 
0 & \text{if} \quad \mu_k=0
\end{cases}
\]
Then, for a number $c\in (0,1)$ we identify 
\[
\DN^0\ni \vp_{\mu} \longleftrightarrow x(\mu):=(x_1,\ldots , x_n)\in \DD_n^0 \subset [0,1)^n
\]
where $x_k=x_k(\mu):=1-c^{r_k(\mu)}$, for $k=1, \ldots , n$. In particular, if $\mu_1\cdot \ldots \cdot \mu_n\neq 0$ (i.e. all coordinates are nonzero), then 
\[
\vp_{\mu} \longleftrightarrow  (1-c^{\mu_1+\dots +\mu_n}, 1-c^{\mu_2+\dots +\mu_n}, \ldots , 1-c^{\mu_n}),
\]
and if $\mu_1=\ldots = \mu_n=0$ then $\vp_{\mu} \longleftrightarrow  (0,\ldots , 0)$. To describe the closure $\overline{\DD_n^0}= \DD_n\subset [0,1]^n$ we use the notation 
\[
\kappa(\mu):=\min\{j:x_j\neq 0\ \text{and}\ x_j\neq 1 \} \quad \text{for}\quad 0\leq x_j\leq 1.
\]
If $k=\kappa(\mu)$ then we consider the sequence 
\[
\mu(p):=(\ve_1,\ldots , \ve_{k-1}, \mu_k(p), \ldots , \mu_n), \quad p=1,2,\dots 
\]
in which $\ve_1,\ldots, \ve_{k-1}\in \{0,\ 1\}$ and all the entries, except the one on $k$-th place, are constant. Letting $\mu_k(p)\to \infty$ as $p\to \infty$ we get the limit 
\[
\lim_{p\to \infty} \vp_{\mu(p)} = \vp_{\mu'}, \quad \text{with}\quad \mu':=(\ve_1,\ldots , \ve_{k-1}, 1,\mu_{k+1}, \ldots ,\mu_n),
\]
which corresponds to the boundary point 
\[
x(\mu')=(\ve_1,\ldots , \ve_{k-1}, 1, 1-c^{r_{k+1}(\mu)}, \ldots, 1-c^{r_{n}(\mu)}) \in \partial \DD_n.
\]
It follows that the accumulation points $\partial \DD_n$ of the spectrum correspond to sequences $x=(x_1, \ldots , x_n)\in [0,1]^n$ for which there exists $1\leq k \leq n$ such that $x_k=1$ and if $x_j\notin \{0,\ 1\}$, then $x_j=1-c^{r_j(\mu)}$ whenever $x=x(\mu)$. In particular, all the vertex points $x\in [0, 1]^n$ with $x_j\in \{0,\ 1\}$, are the accumulation points.  

This way we have obtained the following description of the spectrum $\DN$ of the maximal abelian subalgebra $\D_n$. 
\begin{thm}
\label{thm:spectrum}
The spectrum $\DN$ of the maximal abelian subalgebra $\D_n$ consists of the discrete part $\DN^0$ and the boundary part $\partial \DN$, which can be identified with the following subsets of the $n$-dimensional unit cube $[0,1]^n$
\[
\DN^0\ni \vp_{\mu}  \longleftrightarrow  x(\mu)=(x_1,\ldots , x_n)\in \DD_n^0,\ x_j=1-c^{r_j(\mu)}
\]
and the accumulation points on the boundary \\ 
\begin{eqnarray*}
\partial \DN \ni \vp_{\mu} &\longleftrightarrow &x(\mu)=(x_1,\ldots , x_n)\in [0,1]^n,\\ 
&\text{exists} &  1\leq k \leq n,\ x_k=1, \\ 
&\text{and}&  (x_j\notin \{0,\ 1\} \Rightarrow x_j=1-c^{r_j(\mu)}) 
\end{eqnarray*}
\end{thm}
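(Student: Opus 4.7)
The plan is to verify the stated identification of $\DN$ by (i) checking that \eqref{functionals} defines genuine characters of $\D_n$, (ii) establishing that $\vp_{\mu}\mapsto x(\mu)$ is injective on $\DN^0$, (iii) matching the pointwise-convergence limits of sequences in $\DN^0$ with the claimed boundary subset of $[0,1]^n$, and (iv) showing that $\DN$ is exhausted by $\DN^0\cup\partial\DN$.

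Since $\D_n$ is a unital commutative $C^*$-algebra equal to the closed linear span of $\{P_{\mu},P_{\mu}^0:\mu\in\M_n\}$, each character $\vp\in\DN$ is determined by its $\{0,1\}$-valued restriction to these projections. Multiplicativity of $\vp_{\mu}$ reduces, via Proposition \ref{prop:projprod}, to a short combinatorial check: the relation $\prec$ is transitive, and the set $\{\nu:\nu\preceq\mu\}$ is totally ordered by $\prec$ for every $\mu\in\M_n$, so the dichotomy $P_{\alpha}P_{\beta}=P_{\alpha}$ or $P_{\beta}$ on non-annihilating pairs is consistent with \eqref{functionals}. Injectivity of $\vp_{\mu}\leftrightarrow x(\mu)$ is then immediate: for $c\in(0,1)$ the map $r\mapsto 1-c^r$ is strictly increasing on $[0,\infty)$, so the tuple $(r_1(\mu),\ldots,r_n(\mu))$ is recovered from $x(\mu)$, and the entries $\mu_j$ follow by telescoping $r_j-r_{j+1}$ together with the zero-entry convention.

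The core step is the description of the boundary. A sequence $\vp_{\mu^{(p)}}\subset\DN^0$ converges pointwise iff, for every generator $P_{\nu}$, the values $\vp_{\mu^{(p)}}(P_{\nu})\in\{0,1\}$ eventually stabilize, equivalently iff $x(\mu^{(p)})\to x$ coordinatewise in $[0,1]^n$; this forces each integer sequence $r_j(\mu^{(p)})$ either to stabilize in $\{0,1,2,\ldots\}$ or to diverge to $+\infty$. A diverging $r_j$ produces $x_j=1$, while finite stabilization yields $x_j\in\{0\}\cup\{1-c^m:m\in\N\}$, and the monotonicity $r_j(\mu)\geq r_{j+1}(\mu)$ holding whenever $\mu_j\geq 1$ constrains the admissible limit tuples exactly as in the theorem. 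The explicit sequence constructed in the paragraph preceding the statement (letting $\mu_k(p)\to\infty$ with the tail frozen and choosing $\ve_1,\ldots,\ve_{k-1}\in\{0,1\}$) realizes each such configuration as a limit.

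The step I expect to be the main obstacle is the completeness claim $\DN=\DN^0\cup\partial\DN$. For this I would encode an arbitrary $\vp\in\DN$ by the index set $S(\vp):=\{\nu\in\M_n:\vp(P_{\nu})=1\}$; multiplicativity forces $S(\vp)$ to be a $\prec$-chain (given $\nu_1,\nu_2\in S(\vp)$, multiplicativity yields $\vp(P_{\nu_1}P_{\nu_2})=1$, so $P_{\nu_1}P_{\nu_2}\neq 0$, and Proposition \ref{prop:projprod} then makes the pair $\prec$-comparable) and also downward-closed in $\prec$. If $S(\vp)$ has a $\prec$-maximum $\mu^*$, then $\vp=\vp_{\mu^*}\in\DN^0$. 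Otherwise $S(\vp)$ contains a cofinal increasing chain $\mu^{(p)}$; passing to a subsequence along which every $r_j(\mu^{(p)})$ stabilizes or diverges (possible by a diagonal argument over the countably many generators), one sees that $\vp_{\mu^{(p)}}\to\vp$ pointwise and the limit corresponds to a boundary point of the type described. Compactness of $\DN$ together with metrizability (from separability of $\D_n$) then upgrades the bijection to a homeomorphism with the compact subset of $[0,1]^n$ stated in the theorem.
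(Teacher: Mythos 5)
Your overall route is the same as the paper's: the paper's ``proof'' of Theorem \ref{thm:spectrum} is exactly the discussion preceding the statement (the functionals \eqref{functionals} justified via Proposition \ref{prop:projprod}, the embedding $x_j=1-c^{r_j(\mu)}$, and limits along sequences in which one entry $\mu_k(p)\to\infty$), and your steps (i)--(iii) retrace it. Your step (iv), the exhaustion argument via $S(\vp)$, goes beyond what the paper writes (the paper merely asserts that $\DN^0$ is dense in $\DN$), and it is sound in spirit, provided you also pin down the values of a character on the rank-one generators $P^0_{\nu}$; note in this connection that \eqref{functionals} taken literally is not multiplicative, since $P^0_{\nu}P^0_{\nu'}=0$ for $\nu\neq\nu'$ while both are assigned the value $1$, so in step (i) you must read $\vp_{\mu}$ as the vector state at $A_{\mu}^{*}\Omega$, i.e.\ $\vp_{\mu}(P^0_{\nu})=\delta_{\mu\nu}$.

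There is, however, a genuine gap in your key step (iii) and in the concluding ``upgrade to a homeomorphism''. The asserted equivalence ``$\vp_{\mu^{(p)}}$ converges pointwise iff $x(\mu^{(p)})$ converges coordinatewise'' is false in one direction, and the boundary labelling is not injective. Concretely, for $n=2$ the sequences $\vp_{(0,p)}$ and $\vp_{(1,p)}$ converge weak$^*$ to the \emph{same} character, namely the one taking the value $1$ exactly on the projections $P_{(0,l)}=(A_2^{*})^{l}A_2^{l}$ and $0$ on every $P_{(k,l)}$ with $k\geq1$ and on every $P^0_{\nu}$ (one checks $(1,l)\not\preceq(1,p)$ for $l\neq p$, and $P_{(1,l)}P_{(1,p)}=0$); yet the recipe assigns these two sequences the distinct boundary points $(0,1)$ and $(1,1)$, and interleaving them produces a pointwise convergent sequence of characters whose cube points do not converge. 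More generally, once $r_m(\mu^{(p)})\to\infty$ for some largest index $m$, the limit character depends only on $m$ and on the stabilized entries $\mu_{m+1},\ldots,\mu_n$, so the choices $\ve_1,\ldots,\ve_{m-1}$ in the paper's sequences $\mu(p)$ are invisible in the limit. Hence the map from the boundary set described in the theorem onto $\partial\DN$ is surjective but not injective, and your final claim of a bijection/homeomorphism with that set cannot be completed as stated; one must either quotient the claimed boundary set by identifying points that differ only below the largest coordinate equal to $1$, or settle for a continuous surjection. To be fair, this defect is inherited from the paper's own informal argument (its picture for $n=2$ marks both $(0,1)$ and $(1,1)$ as distinct spectral points), but since your write-up makes the bijectivity explicit, this is the step that fails and needs repair.
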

For a better understanding of the above description of the spectrum
we present first the picture for the case $n=2$ and $c=\frac{1}{2}$. The discrete part consists of the points 
\vspace{.5cm}
\begin{equation*}
\begin{matrix} 
(0,1)& \dots &\dots &\dots &\dots &\dots &\dots  &(1,1) \\
\uparrow&\dots &\dots &\dots &\dots &\dots &\dots & \uparrow \\
\left(0, \frac{2^{k}-1}{2^k}\right),& \dots & \dots & \left(\frac{2^{k+1}-1}{2^{k+1}}, \frac{2^{k}-1}{2^k}\right), &\dots,& \left(\frac{2^{k+m}-1}{2^{k+m}}, \frac{2^{k}-1}{2^k}\right)  &\stackrel{m}{\rightarrow}&  \left(1,  \frac{2^{k}-1}{2^k} \right) \\ 
\vdots&\vdots&\vdots&\vdots&\vdots&\vdots&\vdots& \uparrow \\
\left(0, \frac{3}{4}\right),& \dots & \dots &\left(\frac{7}{8},  \frac{3}{4}\right), &\dots,& \left(\frac{2^{m}-1}{2^m}, \frac{3}{4}\right)  &\rightarrow&  \left(1,  \frac{3}{4} \right) \\ 
\left(0, \frac{1}{2}\right),&\dots & \left(\frac{3}{4}, \frac{1}{2}\right), &\left(\frac{7}{8}, \frac{1}{2}\right), &\dots,& \left(\frac{2^{m}-1}{2^m},\frac{1}{2}\right)  &\rightarrow&  \left(1, \frac{1}{2} \right) \\
(0,0),&\left(\frac{1}{2},0\right),& \left(\frac{3}{4},0\right),& \left(\frac{7}{8},0\right),&  \dots,& \left(\frac{2^{m}-1}{2^m},0\right), &\rightarrow& \left(1, 0\right) 
\end{matrix}
\end{equation*}

\vspace{.5cm}

In Figure \ref{fig:spectrum2} we present the identification of the spectrum $\DD(2)$ with $\DD_2 \subset [0,1]^2$. The red dots represent the corresponding multiplicative functionals and the blue arrows indicate their possible convergences. Observe that the red dots are on the $x_2$-axis (the axis with a higher number) converging to $(0,1)$, and the same happens on $x_1$ axis and on the interval from $(1,0)$ to $(1,1)$. Otherwise, they are always converging to the right, to the points on this vertical interval. Observe, that there are no red dots on the top side of the square. This situation is repeated in the general case, whenever $1\leq x_1<x_2\leq n$.  

\begin{figure}[H]
   \centering
   \begin{tikzpicture}
   \begin{axis}[
    xmin=-0.05, xmax=1.05, ymin=-0.15, ymax=1.05, 
    axis x line=middle, axis y line=middle, 
    enlargelimits=false,
    minor tick num=1,
    axis line style={shorten >=-20pt, shorten <=-20pt},
    xticklabel=\empty,
    xlabel style={
            anchor=west,
            at={(ticklabel* cs:1.0)},
            xshift=20pt
        },
        xlabel=$x_1$,
        ylabel style={
            anchor=south,
            at={(ticklabel* cs:1.0)},
            yshift=20pt
        },
        yticklabel=\empty,
        ylabel=$x_2$
    ]  
    \addplot [
   scatter,
   only marks,
   point meta=explicit symbolic,
   scatter/classes={
            a={mark=*,red}            
        },
    ] table [meta=label] {
   x  y  label
   0  0      a
   0  0.5    a
   0  0.75   a
   0  0.875 a
   0   1     a
   
   0.5   0    a
   0.75  0    a
   0.875 0    a 
   1     0    a
   
   1  0.5    a
   1  0.75   a
   1  0.875  a
   1   1     a
   
   0.75  0.5  a
   0.875 0.5  a
   0.9375 0.5 a
   
   0.875    0.75  a
   0.9375   0.75  a
   0.96875  0.75  a
   
   0.9375     0.875  a
   0.96875    0.875  a
   0.984375    0.875  a
   };
   
   \addplot [
   scatter,
   blue,
   solid,
   point meta=explicit symbolic,
   scatter/classes={
            a={mark=*,red},
            b={mark=triangle*,red},
            c={mark=o,draw=red},
            d={loosely dotted,mark=+}
        },
    ] table [meta=label] {
   x  y    label  
   0   0     d
   1   0     d
   
   0   1     d
   1   1     d
   
   1   1     d
   1   0     d
   
   1   0     d
   0   0     d
   
   0   0     d
   0   1     d  
   };
   
   \addplot [
   scatter,
   yellow,
   solid,
   thin,
   point meta=explicit symbolic,
   scatter/classes={
            a={mark=*,red},
            d={loosely dotted,mark=+}
        },
    ] table [meta=label] {
   x  y    label  
   0   0.5     d
   1   0.5     d
   
   0   0.75     d
   1   0.75     d
   
   0   0.875     d
   1   0.875     d
   
   0.5   0       d
   1     1       d
   
   0.75   0      d
   1     1       d
   
   0.875  0      d
   1      1      d
   };
   
   \addplot [
   scatter,
   yellow,
   solid,
   very thick,
   point meta=explicit symbolic,
   scatter/classes={
            a={mark=*,red},
            d={loosely dotted,mark=+}
        },
    ] table [meta=label] {
   x  y    label  
     
   0.5   0       d
   1     1       d
   
   0.75   0      d
   1     1       d
   
   0.875  0      d
   1      1      d
   };
   
   \draw[-{Classical TikZ Rightarrow[length=2mm]}, color=blue]  (0,0) --  (0,0.625); 
   \draw[-{Classical TikZ Rightarrow[length=2mm]}, color=blue]  (0,0.625) --  (0,1);
  
    \draw[-{Classical TikZ Rightarrow[length=2mm]}, color=blue]  (0.5,0) --  (0.625,0);
   \draw[-{Classical TikZ Rightarrow[length=2mm]}, color=blue]  (0.625,0) --  (0.875,0);
   
   \draw[-{Classical TikZ Rightarrow[length=2mm]}, color=blue]  (1,0) --  (1,0.625);
   \draw[-{Classical TikZ Rightarrow[length=2mm]}, color=blue]  (1,0.625) --  (1,0.875);
   
   \draw[-{Classical TikZ Rightarrow[length=2mm]}, color=blue]  (0.75,0.5) --  (0.875,0.5);
   \draw[-{Classical TikZ Rightarrow[length=2mm]}, color=blue]  (0.875,0.5) --  (0.9375,0.5);
   
   \draw[-{Classical TikZ Rightarrow[length=2mm]}, color=blue]  (0.875,0.75) --  (0.9375,0.75);
   \draw[-{Classical TikZ Rightarrow[length=2mm]}, color=blue]  (0.875,0.75) --  (0.96875,0.75);
  
  \draw[-{Classical TikZ Rightarrow[length=2mm]}, color=blue]  (0.875,0.875) --  (0.96875,0.875);

  \node [below left] at (0,0) {\tiny $0$};
   \node [below] at (0.5,0) {\tiny $\frac{1}{2}$};
   \node [below] at (0.75,0) {\tiny $\frac{3}{4}$};
   \node [below] at (0.875,0) {\tiny $\frac{7}{8}$};
   \node [below] at (1,0) {\tiny $1$};

   \node [left] at (0,0.5) {\tiny $\frac{1}{2}$};
   \node [left] at (0,0.75) {\tiny $\frac{3}{4}$};
   \node [left] at (0,0.875) {\tiny $\frac{7}{8}$};
   \node [left] at (0,1) {\tiny $1$};
       
   \end{axis}
   \end{tikzpicture}
      \caption{The identification of the spectrum $\Delta(2)$ with $\Delta_2\subset[0,1]^2 $.}
      \label{fig:spectrum2}
   \end{figure}
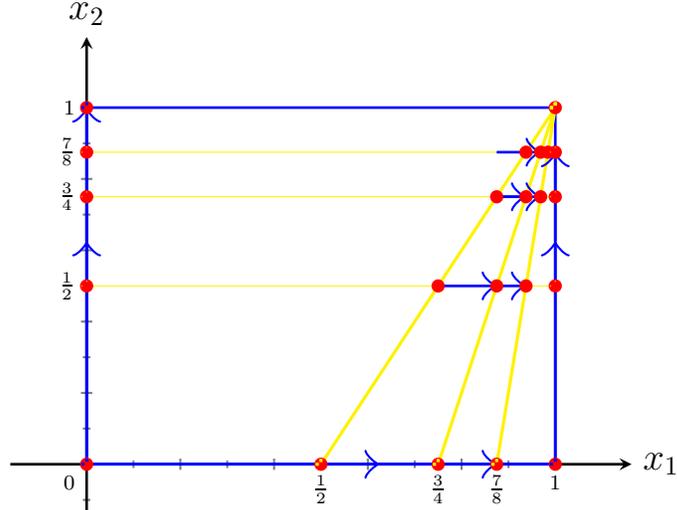

In Figure \ref{fig:spectrum3} we present the case $n=3$, with ordered axes \\ $1\leq x_1<x_2<x_3\leq n$, in which each face, except the top one, looks the same way as in the case $n=2$.

   \begin{figure}[H]
   \centering
   \begin{tikzpicture}
    \begin{axis}[
    xmin=-0.10, xmax=1.30, ymin=-0.20, ymax=1.05, zmin=-0.15, zmax=1.10,
    axis x line=middle, axis y line=middle, axis z line=middle,
    enlargelimits=false,
    minor tick num=1,
    grid=both,
    axis line style={shorten >=-20pt, shorten <=-20pt},
    xticklabel=\empty,
    xlabel style={
            anchor=west,
            at={(ticklabel* cs:1.0)},
            xshift=20pt
        },
        xlabel=$x_1$,
        yticklabel=\empty,
        ylabel style={
            anchor=west,
            at={(ticklabel* cs:1.0)},
            yshift=20pt
        },
        ylabel=$x_2$,   
        zticklabel=\empty,  
        z label style={anchor=south east},
        zlabel=$x_3$
    ]
    \addplot3 [
   scatter,
   only marks,
   point meta=explicit symbolic,
   scatter/classes={
            a={mark=*,red}
        },
    ] table [meta=label] {
   x    y    z    label
   0    0    0      a
   0    0    0.5    a
   0    0    0.75   a
   0    0    0.875  a
   0    0     1     a
   
   0  0.5    0     a
   0  0.75   0     a
   0  0.875  0     a
   0   0     1     a
   
   0.5   0   0     a
   0.75  0   0     a
   0.875 0   0     a
   
   1     0.5     0 a
   1     0.75    0 a
   1     0.875   0 a
   
   0.5        1     0 a
   0.75       1     0 a
   0.875      1     0 a
   1          1     0 a
   
   0          1     0.5   a
   0          1     0.75  a
   0          1     0.875 a
   0          1     0     a
   
   1          0       0    a
   1          0      0.5   a
   1          0      0.75  a
   1          0      0.875 a
   1          0      1     a
   
   1          1      0.5   a
   1          1      0.75  a
   1          1      0.875 a
   1          1      1     a
   
   0          0.75     0.5   a 
   0          0.875    0.5   a
   0          0.9375   0.5   a
   
   0          0.875     0.75   a 
   0          0.9375    0.75   a
   0          0.96875   0.75   a
   
   0          0.9375    0.875   a 
   0          0.96875   0.875   a
   0          0.984375  0.875   a
   0          1         1       a
   
   0.75        1         0.5    a
   0.875       1         0.5    a
   0.9375      1         0.5    a
   
   0.75        1         0.75    a
   0.875       1         0.75    a
   0.9375      1         0.75    a
   
   0.75        1         0.875    a
   0.875       1         0.875    a
   0.9375      1         0.875    a
   
    0.75           0.5         0      a
    0.875          0.5         0      a
    0.9375         0.5         0      a
                         
    0.875           0.75        0     a
    0.9375          0.75        0     a
    0.96875         0.75        0     a
                         
   0.9375            0.875       0     a
   0.96875           0.875       0     a
   0.984375          0.875       0     a
   
    0.75                 0       0.5   a
    0.875                0       0.5   a
    0.9375               0       0.5   a
    
    0.875               0         0.75   a
    0.9375              0         0.75   a
    0.96875             0         0.75   a
    
    0.9375               0        0.875    a
    0.96875              0        0.875    a
    0.984375             0        0.875    a
    
    1                 0.75       0.5   a
    1                0.875       0.5   a
    1               0.9375       0.5   a
    
   1               0.875         0.75   a
   1              0.9375         0.75   a
   1             0.96875         0.75   a
    
   1               0.9375        0.875    a
   1               0.96875       0.875    a
   1               0.984375      0.875    a                           
    
   };
   \addplot3 [
   scatter,
   yellow,
   solid,
   thin,
   point meta=explicit symbolic,
   scatter/classes={
            a={mark=*,red},
            b={mark=triangle*,red},
            c={mark=o,draw=red},
            d={loosely dotted,mark=+}
        },
    ] table [meta=label] {
   x      y      z    label  
   0    0      0.5      d
   0   1       0.5      d
   
   0    0      0.75      d
   0    1       0.75     d
   
   0    0      0.875     d
   0    1       0.875     d
   
   0     1       0.5      d
   1     1       0.5      d
      
   0     1       0.75      d
   1     1       0.75      d
   
   0     1       0.875      d
   1     1       0.875      d
   
   1     1       0.5       d
   1     0       0.5       d
   0     0       0.5       d
   
   1     1       0.75       d
   1     0       0.75       d
   0     0       0.75       d
   
   1     1       0.875       d
   1     0       0.875       d
   0     0       0.875       d
   
   0     0.5       0         d
   1     0.5       0         d
   
   0     0.75       0         d
   1     0.75       0         d
   
   0     0.875       0        d
   1     0.875       0        d
   };
   
   \addplot3 [
   scatter,
   yellow,
   solid,
   very thick,
   point meta=explicit symbolic,
   scatter/classes={
            a={mark=*,red},
            b={mark=triangle*,red},
            c={mark=o,draw=red},
            d={loosely dotted,mark=+}
        },
    ] table [meta=label] {
   x      y      z    label  
   0    0.5      0      d
   0     1       1      d
   
   0    0.75     0      d
   0     1       1      d
   
   0    0.875    0      d
   0     1       1      d

   0.5    0      0      d
   1      0      1      d
   
   0.75    0     0      d
   1       0     1      d
   
   0.875    0   0      d
   1        0   1      d

   1     0.5      0      d
   1      1       1      d
   
   1    0.75     0      d
   1       1     1      d
   
   1    0.875    0      d
   1        1    1      d

   0.5     0     0      d
   1       1     0      d
   
   0.75     0     0      d
   1       1     0      d
   
   0.875     0     0     d
   1         1     0      d

   0.5       1      0     d   
   1         1      1     d
   
   0.75       1      0     d   
   1          1      1     d
   
   0.875       1      0     d   
   1           1      1     d
   };
   
   \addplot3 [
   scatter,
   blue,
   solid,
   point meta=explicit symbolic,
   scatter/classes={
            a={mark=*,red},
            b={mark=triangle*,red},
            c={mark=o,draw=red},
            d={loosely dotted,mark=+}
        },
    ] table [meta=label] {
   x      y    z   label  
   0      0    0    d
   0      0    1    d
   0      1    1    d
   1      1    1    d
   1      1    0    d
   1      0    0    d
   
   0      0    0    d
   0      1    0    d
   0      1    1    d
   
   0      1    0    d
   1      1    0    d
   1      0    0    d
   0      0    0    d
   
   1      0    0    d
   1      0    1    d
   1      1    1    d
   
   1      0    1    d
   0      0    1    d 
   };
   
   \draw[-{Classical TikZ Rightarrow[length=2mm]}, color=blue]  (0,0,0) --  (0.70,0,0); 
   \draw[-{Classical TikZ Rightarrow[length=2mm]}, color=blue]  (0,0,0) --  (0,0,0.70);
   \draw[-{Classical TikZ Rightarrow[length=2mm]}, color=blue]  (0,1,0) --  (0,1,0.7); 
   \draw[-{Classical TikZ Rightarrow[length=2mm]}, color=blue]  (1,0,0) --  (1,0,0.7);
   \draw[-{Classical TikZ Rightarrow[length=2mm]}, color=blue]  (1,0,0) --  (1,0.7,0);
   \draw[-{Classical TikZ Rightarrow[length=2mm]}, color=blue]  (0,0,0) --  (0,0.7,0);
   \draw[-{Classical TikZ Rightarrow[length=2mm]}, color=blue]  (0,1,0) --  (0.7,1,0);
   \draw[-{Classical TikZ Rightarrow[length=2mm]}, color=blue]  (1,1,0) --  (1,1,0.7);
   
   \node [below right] at (0,0,0) {\tiny $(0,0,0)$};
   \node [right] at (0,0,1) {\tiny $(0,0,1)$};
   \node [below right] at (0,1,1) {\tiny $(0,1,1)$};
   \node [right] at (0,1,0) {\tiny $(0,1,0)$};
   \node [right] at (1,0,1) {\tiny $(1,0,1)$};   
   \node [right] at (1,1,1) {\tiny $(1,1,1)$};
   \node [right] at (1,1,0) {\tiny $(1,1,0)$};
   \node [below left] at (1,0,0) {\tiny $(1,0,0)$}; 
      
  \end{axis}
 \end{tikzpicture}
 \caption{The identification of the spectrum $\Delta(3)$ with $\Delta_3\subset[0,1]^3$.}\label{fig:spectrum3}
\end{figure}
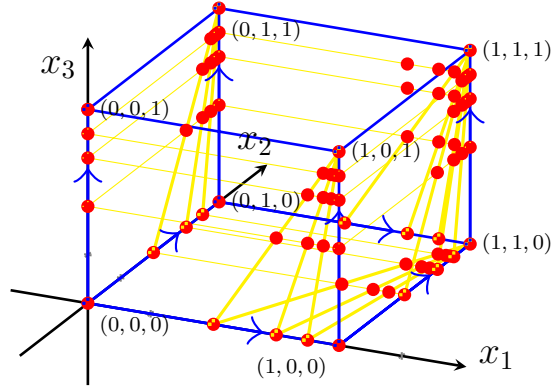

\subsection*{Acknowledgments}
The first named author is partially supported by ``INdAM-GNAMPA Project'' codice CUP$\_$E53C22001930001, Italian PNRR MUR project PE0000023-NQSTI, CUP H93C22000670006, and  Progetto ERC SEEDS UNIBA ``$C^*$-algebras and von Neumann algebras in Quantum Probability'', CUP H93C23000710001. \\
Finally, this research is part of the EU Staff Exchange project 101086394 ``Operator Algebras That One Can See''.

\end{document}